\theoremstyle{definition}
\newtheorem{teo}{Theorem}
\newtheorem{defn}[teo]{Definition}
\newtheorem{cor}[teo]{Corollary}
\newtheorem{rem}[teo]{Remark}
\newcounter{mybibstartvalue}
\newcommand{\vd}{\mathrm{d}}
\newcommand{\newsectionstyle}{%
  \renewcommand{\@secnumfont}{\bfseries}
  \renewcommand\section{\@startsection{section}{2}%
    \z@{.5\linespacing\@plus.7\linespacing}{-.5em}%
    {\normalfont\bfseries}}%
}
\let\oldsection\section
\let\old@secnumfont\@secnumfont
\newcommand{\originalsectionstyle}{%
  \let\@secnumfont\old@secnumfont
  \let\section\oldsection
}
\def\quotient#1#2{%
    \raise1ex\hbox{$#1$}\Big/\lower1ex\hbox{$#2$}%
}
\newcommand\reallywidecheck[1]{%
\savestack{\tmpbox}{\stretchto{%
  \scaleto{%
    \scalerel*[\widthof{\ensuremath{#1}}]{\kern-.6pt\bigwedge\kern-.6pt}%
    {\rule[-\textheight/2]{1ex}{\textheight}}
  }{\textheight}%
}{0.5ex}}%
\stackon[1pt]{#1}{\scalebox{-1}{\tmpbox}}%
}
\title{Distributional Solution and Spectral Shift Function of Heun Differential Equation} 
\author{Ubong Sam IDIONG}
\begin{document}
    \begin{abstract}
In this work, the Heun operator is written as an element in the universal enveloping algebra of the Lie algebra $\mathscr{G}=\mathscr{L}(G)$ of the Lie group $G=SL(2,\mathbb{C})$. The Green function and the spectral shift function of the exactly solvable Heun operator from the resulting Lie algebraic equation are obtained via Fourier transform over $G$.
\end{abstract} 
\maketitle
\let\thefootnote\relax
\footnote{MSC2020: Primary 34M46, Secondary 34B27, 16S30.}
\footnote{Keywords: Distributional Solutions, Green function, Spectral Shift Function, Universal enveloping algebra}

\date{\today}

\section{Introduction}
Heun differential equation (HDE) is a second-order linear ordinary differential equation in the complex domain defined by
\begin{equation}\label{qhd}
  z(z-1)(z-a)\Psi''(z)+[\gamma(z-1)(z-a)+\delta z(z-a)+ \varepsilon z(z-1)]\Psi'(z)+(\alpha\beta z-q)\Psi(z)=0,
\end{equation}
such that $\alpha,\beta,\gamma,\delta,\varepsilon$ satisfy the constraint equation
\begin{equation}\label{cstr}
  \alpha+\beta+1=\gamma+\delta+\varepsilon.
\end{equation}
 (see ~\cite{RA1}, \S(4.2.3), p. 47). Here, $a\in\mathbb{C}\setminus \{0,1\}$ and $q\in\mathbb{C}$  is the accessory parameter, which plays the role of spectral parameter in most applications.  This equation has regular singularities $0,1, a,\infty$ with corresponding exponents $\{0, 1-\gamma\},\{1,1-\delta\}, \{a, 1-\varepsilon\}$  and $\{\infty, \alpha\beta\}$ respectively.
The search for solutions to HDE has attracted much research interest. The HDE, using the formula, $2^{n-1}n!,$ is known to have 192 solutions, which corresponds with the number of regular singularities $n=4$  (\cite{MRS}:~811).

The concept of distributional solutions of Fuchsian equations has been studied in  \cite{LJK, ERK1, ERK2}. These studies have been limited to studying ordinary differential equations with three regular singularities $0,1$ and $\infty$ using Laplace transform. This paper extends the study to operators with four regular singularities, $0,1, a,\infty$ of which the Heun equation is typical.

A grasp of Fuchsian differential operators' spectral shift functions (SSFs) is necessary to comprehend their spectrum properties, particularly how they vary in response to disturbances on the Reimann surfaces. In quantum mechanics, mathematical physics, and inverse spectrum problems, it is essential to extend the SSFs which are being studied on the real line to the projective complex line. A thorough analysis of these functions improves the mathematical context and produces robust tools. It also illustrates the stability properties of solutions to differential equations under small disturbances. Research in this area may lead to novel techniques and mathematical theorems. 

Furthermore, the literature on applications of SSFs to Fuchsian-type equations is scarce.  This is because SSFs are difficult to study on the Riemann sphere. In studying SSFs of Fuchsian equations, the computation of Green functions is inevitable. Our motivation and main objective in this paper include computing the Green kernel of the exactly solvable HDE, and by way of application, examine the compactness of its associated Green integral relation and its SSF.

In its canonical form, the Heun operator given by
 $$H_c=\frac{\vd^2}{\vd z^2}+\left(\frac{\gamma}{z}+\frac{\delta}{z-1}+\frac{\varepsilon}{z-a}\right)\frac{\vd}{\vd z}+\frac{\alpha\beta z-q}{z(z-1)(z-a)}.$$

 All other Fuchsian equations having four regular singularities in the extended complex plane $\mathbb{CP}^{1}=\mathbb{C}\cup\{\infty\}$ can be transformed into \eqref{qhd} (\cite{OFW}, \S 31.2:~711). For our interest, equation~\eqref{qhd} can be re-written in the form
\begin{equation}\label{qhd1}
    (z^{3}-(1+a)z^{2}+az)\Psi''(z)+[(\gamma+\delta+ \varepsilon)z^{2}-((1+a)\gamma+\delta+\varepsilon)z+\gamma a]\Psi'(z)
 +(\alpha\beta z-q)\Psi(z)=0.
\end{equation}

The outline of follows:  Section~\ref{prel} shall consist of mathematical preliminaries leading to Lie algebraization of the Heun differential equation and eigenvalue of an exactly solvable Heun differential equation. In Section \ref{GrHu}, we present the main results in this paper, which include: the distributional solution of the HDE is also evaluated; Green function of the equation; the SSF of Heun operator; and finally, and compactness integral relation is also determined. In the last Section, vital conclusions are drawn
 on the work.
\section{Prelimnaries}\label{prel}
In this section, a preliminary result which transforms the Heun operator as an element in the universal enveloping algebra $U(\mathscr{G})$ of the Lie algebra $\mathscr{G}=\mathscr{L}(G)$ of the Lie Group $G=SL(2,\mathbb{C})$ is given without proof.  $U(\mathscr{G})$ is also a Lie algebra and can be a finite-dimensional algebra. Elements of $U(\mathscr{G})$ are called invariant differential operators acting on the Lie group $G.$ Standard reference materials on UEA include ~\cite{DJ} and (\cite{KJA}, \S4.8:85-86) among others.
The Lie algebra under consideration here is the three-dimensional Lie algebra $sl(2,\mathbb{C})$ (cf: \cite{MIL}, \S 1, Eq.(1.61), p. 20) which is spanned by first-order differential operators in complex variable $z$:
 \begin{equation}\label{Lgen}
J_{+}:=z^{2}\frac{\vd}{\vd z}-2jz,\;\;\; J_{0}:=z\frac{\vd}{\vd z}-j,\;\;\; J_{-}:=\frac{\vd}{\vd z}
 \end{equation}
where $j=\frac{n}{2}, n\in\mathbb{Z}.$
Thus, the operators \eqref{Lgen} satisfy the commutation relations,
\begin{equation}\label{qalg}
[J_{+},J_{-}]=2J_{0},\;[J_{0},J_{+}]=J_{+} , \; [J_{0},J_{-}]=-J_{-}.
\end{equation}
which is isomorphic to the matrix generators of the Lie algebra $sl(2,\mathbb{C}).$

In what follows, we present some preliminary results on the transformations of Heun differential operator to an element of the UEA of $SL(2,\mathbb{C})$ and its gauge transformation.
\begin{teo}  The Heun differential expression in ~\eqref{qhd1} as an element of universal enveloping algebra of $SL(2,\mathbb{C})$
can be written in terms of the generators of the Lie algebra $sl(2,\mathbb{C})$ as
\begin{eqnarray*}
-H\psi &:=&\bigg\lbrace\frac{1}{2}[ J_{+}J_{0}+J_{0}J_{+}]-\frac{(1+a)}{2}[J_{+}J_{-}+J_{-}J_{+}]+\frac{a}{2}[ J_{0}J_{-}+J_{-}J_{0}]\\
 &&+[\gamma+\delta+\varepsilon
 +\frac{3}{2}(2j-1)]J_{+}+[(2j-1-\gamma)(1+a)-\delta-\varepsilon]J_{0}\\
 &&+a[\gamma-\frac{(2j-1)}{2}]J_{-} +j[(2(1-j)+\gamma)(1+a)+\delta+\varepsilon]-q\bigg\rbrace\psi .
 \end{eqnarray*}
 provided that $8j^{2}+2j(\alpha+\beta-1)+\alpha\beta=0$ and its explicit form depending on the spin number $j$ becomes
$$-H_{j}=z(z-1)(z-a)D^{2}+[\rho_{j} z^{2} +\sigma_{j}  z+\tau_{j}]D+\alpha_{j}\beta_{j} z, \;\; (D=\vd/\vd z)$$
with
\begin{eqnarray*}
 \rho_{j} &=& \gamma +\delta +\varepsilon=\alpha+\beta+1;\\
 \sigma_{j} &=& [2(2j-1)-\gamma] (a+1) -\delta - \varepsilon;\\
\tau_{j} &=& a(\gamma -2j+1));\\
\alpha_{j}\beta_{j} &=&-2j(2j+\alpha+\beta);\;\;\mathrm{and} \\
q_{j}&=& j[\big(2(1-j)+\gamma\big)(1+a)-\delta-\varepsilon].
\end{eqnarray*}
\end{teo}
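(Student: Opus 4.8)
The plan is to prove the identity by \emph{direct expansion}. First I would substitute the first-order realizations \eqref{Lgen}, namely $J_{+}=z^{2}D-2jz$, $J_{0}=zD-j$ and $J_{-}=D$ with $D=\vd/\vd z$, into each symmetrized product. Because the generators do not commute (cf.\ \eqref{qalg}), each anticommutator must be computed separately by a single application of the product rule; one obtains $\tfrac{1}{2}(J_{+}J_{0}+J_{0}J_{+})=z^{3}D^{2}+\tfrac{3}{2}(1-2j)z^{2}D+j(2j-1)z$, then $\tfrac{1}{2}(J_{+}J_{-}+J_{-}J_{+})=z^{2}D^{2}+(1-2j)zD-j$, and $\tfrac{1}{2}(J_{0}J_{-}+J_{-}J_{0})=zD^{2}+\tfrac{1}{2}(1-2j)D$. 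The only delicate point at this stage is to keep the operator ordering straight, so that $J_{+}J_{0}$ and $J_{0}J_{+}$ (which differ by $J_{+}$) are not conflated.

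Next I would form the quadratic part $\tfrac{1}{2}(J_{+}J_{0}+J_{0}J_{+})-\tfrac{1+a}{2}(J_{+}J_{-}+J_{-}J_{+})+\tfrac{a}{2}(J_{0}J_{-}+J_{-}J_{0})$. Its $D^{2}$-coefficient collapses to $z^{3}-(1+a)z^{2}+az=z(z-1)(z-a)$, which already reproduces the Heun leading symbol and justifies the weights $1,-(1+a),a$. I would then adjoin the three linear terms with coefficients $A=\gamma+\delta+\varepsilon+\tfrac{3}{2}(2j-1)$, $B=(2j-1-\gamma)(1+a)-\delta-\varepsilon$, $C=a[\gamma-\tfrac{1}{2}(2j-1)]$ and the additive constant, expand the $J$'s once more, and regroup the whole expression by order of $D$. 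Collecting the $D$-coefficient by powers of $z$ gives $\rho_{j}z^{2}+\sigma_{j}z+\tau_{j}$; in the $z^{2}$ slot the two contributions $\tfrac{3}{2}(1-2j)$ and $\tfrac{3}{2}(2j-1)$ cancel, leaving exactly $\rho_{j}=\gamma+\delta+\varepsilon$, and the $z^{1}$ and $z^{0}$ slots simplify to the stated $\sigma_{j}$ and $\tau_{j}$.

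The remaining (multiplication) part contributes a term linear in $z$ together with a constant. Its $z$-coefficient is $j(2j-1)-2jA=-2j[(2j-1)+(\gamma+\delta+\varepsilon)]$, and it is precisely here that the constraint \eqref{cstr}, $\gamma+\delta+\varepsilon=\alpha+\beta+1$, is invoked to rewrite it as $\alpha_{j}\beta_{j}=-2j(2j+\alpha+\beta)$. The additive constant, compared against the accessory term $-q$ of \eqref{qhd1}, fixes the value $q_{j}$, while matching the non-derivative term against the $\alpha\beta z$ of \eqref{qhd} produces a quadratic relation in $j$ — the solvability condition $8j^{2}+2j(\alpha+\beta-1)+\alpha\beta=0$ assumed in the hypothesis. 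The explicit operator $-H_{j}=z(z-1)(z-a)D^{2}+(\rho_{j}z^{2}+\sigma_{j}z+\tau_{j})D+\alpha_{j}\beta_{j}z$ is then read off directly from the collected coefficients.

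I expect the main obstacle to be organizational rather than conceptual. The noncommutative product rule generates a large number of monomials in $z$ and $D$, and the final form depends on several near-cancellations — most visibly the vanishing of the $z^{2}D$ coefficient down to $\gamma+\delta+\varepsilon$ and the reduction of the zeroth-order term through \eqref{cstr}. The care required is in maintaining a single consistent operator ordering throughout and in tracking exactly which input — the commutation relations \eqref{qalg}, the constraint \eqref{cstr}, or the condition on $j$ — is responsible for each simplification; any dropped cross term in the anticommutators would corrupt the low-order coefficients without affecting the leading symbol, making such errors easy to miss.
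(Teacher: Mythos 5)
Note first that the paper supplies no proof of this theorem at all: Section~2 states that the preliminary result ``is given without proof,'' so there is no authorial argument to compare yours against, and direct expansion of the symmetrized products is indeed the natural (essentially the only) route. Your three anticommutator formulas check out --- $\tfrac12(J_+J_0+J_0J_+)=z^3D^2+\tfrac32(1-2j)z^2D+j(2j-1)z$, $\tfrac12(J_+J_-+J_-J_+)=z^2D^2+(1-2j)zD-j$, $\tfrac12(J_0J_-+J_-J_0)=zD^2+\tfrac12(1-2j)D$ --- as do the resulting collections: the $D^2$ symbol $z^3-(1+a)z^2+az=z(z-1)(z-a)$, the cancellation giving $\rho_j=\gamma+\delta+\varepsilon$, the values of $\sigma_j$ and $\tau_j$, and the $z$-coefficient $j(2j-1)-2jA=-2j[(2j-1)+\gamma+\delta+\varepsilon]=-2j(2j+\alpha+\beta)$ after \eqref{cstr}.

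The gap is in the final matching step, which you assert rather than carry out, and that is precisely where the remaining claims of the statement live. Matching $\alpha_j\beta_j=-2j(2j+\alpha+\beta)$ against the $\alpha\beta z$ term of \eqref{qhd1} yields $4j^2+2j(\alpha+\beta)+\alpha\beta=0$, i.e.\ $(2j+\alpha)(2j+\beta)=0$, not the hypothesis $8j^2+2j(\alpha+\beta-1)+\alpha\beta=0$ quoted in the theorem; the two differ by $2j(2j-1)$, and your argument would have to either produce that extra term from somewhere or record the discrepancy explicitly. Likewise, collecting the genuine constants --- $(1+a)j$ from the $-(1+a)$ anticommutator, $-j[(2j-1-\gamma)(1+a)-\delta-\varepsilon]$ from the $J_0$ term, plus the displayed additive constant --- gives $2j[(2(1-j)+\gamma)(1+a)+\delta+\varepsilon]-q$, which does not reduce to the stated $q_j=j[(2(1-j)+\gamma)(1+a)-\delta-\varepsilon]$: the factor of $2$ and the sign of $\delta+\varepsilon$ both disagree. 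Until this bookkeeping is actually performed, the proposal does not establish the theorem as written; performing it shows that the quoted solvability condition and $q_j$ cannot be obtained by the comparison you describe, so the proof cannot close without either correcting those two formulas or explaining where the additional terms are meant to come from.
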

The condition for the Heun operator $H$ to be exactly solvable is that the coefficient of the term $J_{+}$ in positive grading must be equal to zero.
\begin{cor} Exactly solvable Heun Hamiltonian is given by
\begin{eqnarray*}
 - H_{e}&=&\frac{1}{2}[ J_{+}J_{0}+J_{0}J_{+}]-\frac{(1+a)}{2}[J_{+}J_{-}+J_{-}J_{+}]+\frac{a}{2}[ J_{0}J_{-}+J_{-}J_{0}]\\
 &&+[(2j-1-\gamma)(1+a)-\delta-\varepsilon]J_{0}+a[\gamma-\frac{(2j-1)}{2}]J_{-}\\
 &&+j[(2(1-j)+\gamma)(1+a)+\delta+\varepsilon]-q.
\end{eqnarray*}
and setting $j=\frac{n}{2}$, in the expanded form in terms the product of generators $J_{+},J_{0},J_{-}$ in \eqref{Lgen} is given by
\begin{multline*}
-H_{\frac{n}{2},e}=z(z-1)(z-a)D^{2}+[\rho_{\frac{n}{2}} z^{2} +\sigma_{\frac{n}{2}}  z+\tau_{\frac{n}{2}}]D+\alpha_{\frac{n}{2}}\beta_{\frac{n}{2}} z, \;\; (D=\vd/\vd z)
\end{multline*}
with
\begin{eqnarray*}
 \rho_{\frac{n}{2}} &=& \frac{3(1-n)}{2};\\
 \sigma_{\frac{n}{2}} &=&( n-1-\gamma)(1+a)-\delta-\varepsilon];\\
\tau_{\frac{n}{2}} &=& a(\gamma- \frac{n-1}{2});\\
\alpha_{\frac{n}{2}}\beta_{\frac{n}{2}} &=& \frac{n(n-1)}{2};\;\;\mathrm{and} \\
q_{\frac{n}{2}}&=& -\frac{n}{2}\left(2-n+\gamma\right)(1+a)-\delta-\varepsilon).
\end{eqnarray*}
and its associated eigenvalue $$E_{\frac{n}{2},e}=q_{\frac{n}{2}}+\Xi_{\frac{n}{2}}=n[(2-n+\gamma)(a+1)+\delta+\varepsilon]-q.$$
\end{cor}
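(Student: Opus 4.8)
The plan is to obtain the corollary as a specialization of the Lie-algebraic expression for $-H$ furnished by the preceding theorem, via (i) the exact-solvability condition and (ii) the choice $j=n/2$. First I would read off the coefficient of $J_+$ from the theorem, namely $\gamma+\delta+\varepsilon+\tfrac32(2j-1)$, and impose the stated exact-solvability criterion that this coefficient vanish. This yields the relation $\gamma+\delta+\varepsilon=\tfrac{3(1-2j)}{2}$, which at $j=n/2$ is exactly $\rho_{n/2}=\tfrac{3(1-n)}{2}$. Deleting the $J_+$ term from $-H$ then gives the Hamiltonian $-H_e$ displayed in the corollary, so the first assertion is immediate.

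Next I would pass from the symmetrized Lie-algebraic form of $-H_e$ to an explicit second-order operator in $z$ and $D$. Using the realization \eqref{Lgen} I would expand each symmetrized product $\tfrac12[J_+J_0+J_0J_+]$, $-\tfrac{1+a}{2}[J_+J_-+J_-J_+]$ and $\tfrac{a}{2}[J_0J_-+J_-J_0]$, together with the linear contributions $A\,J_0$ and $B\,J_-$ where $A=(2j-1-\gamma)(1+a)-\delta-\varepsilon$ and $B=a[\gamma-\tfrac{2j-1}{2}]$. Summing the second-order parts reproduces $z(z-1)(z-a)D^2$, while gathering the coefficients of $z^2D$, $zD$, $D$ and of multiplication by $z$ and then setting $j=n/2$ produces $\rho_{n/2}$, $\sigma_{n/2}$, $\tau_{n/2}$ and $\alpha_{n/2}\beta_{n/2}$. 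The closed form $\alpha_{n/2}\beta_{n/2}=\tfrac{n(n-1)}{2}$ is not visible from the expansion by itself: here I would invoke the Heun constraint \eqref{cstr}, $\alpha+\beta+1=\gamma+\delta+\varepsilon$, which combined with exact solvability forces $\alpha+\beta=-\tfrac{3n-1}{2}$, and substitute this into the theorem's $\alpha_j\beta_j=-2j(2j+\alpha+\beta)$.

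Finally, for the eigenvalue I would isolate the genuinely constant part of the expanded operator, i.e.\ the terms carrying neither $z$ nor $D$. These come from the $\psi$-proportional remainder of the symmetrized $J_+J_-$ product, from the $-Aj$ piece of $A\,J_0$, and from the explicit additive constant $j[(2(1-j)+\gamma)(1+a)+\delta+\varepsilon]-q$. Adding these and simplifying at $j=n/2$ collapses the sum to $n[(2-n+\gamma)(a+1)+\delta+\varepsilon]-q$, which is the asserted $E_{n/2,e}$; splitting off the explicit constant as $q_{n/2}$ leaves the structural remainder $\Xi_{n/2}$, giving the decomposition $E_{n/2,e}=q_{n/2}+\Xi_{n/2}$. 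I expect the principal obstacle to be bookkeeping rather than ideas: the symmetrized products are noncommutative and each feeds several coefficients at once, so the delicate point is tracking the zeroth-order contributions precisely, since the eigenvalue is exactly their sum. A secondary subtlety is that the exact-solvability condition and the constraint \eqref{cstr} must be applied together to collapse $\rho_j$ and $\alpha_j\beta_j$ to their stated closed forms.
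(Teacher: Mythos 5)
Your proposal follows essentially the same route as the paper's proof: rewrite the $J_{+}$ coefficient via the constraint \eqref{cstr}, impose that it vanish to drop the $J_{+}$ term, set $j=\tfrac{n}{2}$, and expand the symmetrized products of the generators \eqref{Lgen} to recover the differential form and its coefficients. You actually supply more of the bookkeeping than the paper does (e.g.\ deriving $\alpha_{\frac{n}{2}}\beta_{\frac{n}{2}}=\tfrac{n(n-1)}{2}$ by substituting $\alpha+\beta=\tfrac{1-3n}{2}$ into $\alpha_j\beta_j=-2j(2j+\alpha+\beta)$), but the underlying argument is the same.
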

\begin{proof}
By using the constraint equation~\eqref{cstr} and coefficient of $J_{+}$
\begin{eqnarray}
  \gamma+\delta+\varepsilon+\frac{3}{2}(2j-1) &=& \alpha+\beta+1+\frac{3}{2}(2j-1)\nonumber \\
   &=& \alpha+\beta+3j-\frac{1}{2}.\label{aggx}
\end{eqnarray}
so that
\begin{eqnarray}\label{qhd4}
 -H&=&\frac{1}{2}[ J_{+}J_{0}+J_{0}J_{+}]-\frac{(1+a)}{2}[J_{+}J_{-}+J_{-}J_{+}]+\frac{a}{2}[ J_{0}J_{-}+J_{-}J_{0}]\nonumber\\
 &&+[\gamma+\delta+\varepsilon
 +\frac{3}{2}(2j-1)]J_{+}+[(2j-1-\gamma)(1+a)-\delta-\varepsilon]J_{0}\nonumber\\
 &&+a[\gamma-\frac{(2j-1)}{2}]J_{-} +j[(2(1-j)+\gamma)(1+a)+\delta+\varepsilon]-q.
\end{eqnarray}
By setting $\alpha+\beta+3j-\frac{1}{2}=0$, the QES Heun operator in equation~\eqref{qhd4} reduces to an exactly solvable Heun Hamiltonian $H_e$ given by
\begin{eqnarray}\label{He}
- H_{e}&=&\frac{1}{2}[ J_{+}J_{0}+J_{0}J_{+}]-\frac{(1+a)}{2}[J_{+}J_{-}+J_{-}J_{+}]+\frac{a}{2}[ J_{0}J_{-}+J_{-}J_{0}]\nonumber\\
 &&+[(2j-1-\gamma)(1+a)-\delta-\varepsilon]J_{0}+a[\gamma-\frac{(2j-1)}{2}]J_{-}\nonumber\\
 &&+j[(2(1-j)+\gamma)(1+a)+\delta+\varepsilon]-q.
\end{eqnarray}
Now, setting $j=\frac{n}{2}$, \eqref{He} can be rewritten in the differential expression below as
 by expanding \eqref{He} in terms the product of generators $J_{+},J_{0},J_{-}$ in \eqref{Lgen} to get
\begin{multline}\label{qham1}
-H_{\frac{n}{2},e}=z(z-1)(z-a)D^{2}+[\rho_{\frac{n}{2}} z^{2} +\sigma_{\frac{n}{2}}  z+\tau_{\frac{n}{2}}]D+\alpha_{\frac{n}{2}}\beta_{\frac{n}{2}} z, \;\; (D=\vd/\vd z)
\end{multline}
with
\begin{eqnarray*}
 \rho_{\frac{n}{2}} &=& \frac{3(1-n)}{2};\\
 \sigma_{\frac{n}{2}} &=&( n-1-\gamma)(1+a)-\delta-\varepsilon;\\
\tau_{\frac{n}{2}} &=& a(\gamma- \frac{n-1}{2});\\
\alpha_{\frac{n}{2}}\beta_{\frac{n}{2}} &=& \frac{n(n-1)}{2};\;\;\mathrm{and} \\
q_{\frac{n}{2}}&=& -\frac{n}{2}\left[(n-\gamma)(1+a)-\delta-\varepsilon\right].
\end{eqnarray*}
The eigenvalue for the exactly solvable operator $-H_{\frac{n}{2},e}$ is $$E_{\frac{n}{2},e}=q_{\frac{n}{2}}+\Xi_{\frac{n}{2}}=n[(n-\gamma)(a+1)-\delta-\varepsilon]-q,\;\;n\in\mathbb{Z}.$$
\end{proof}

The group $G=SL(2,\mathbb{C})$ acts on the complex projective space $\mathbb{CP}^{1}=P(\mathbb{C}^{2})$ by linear fractional (M\"{o}bius) transformation
\begin{equation}\label{mob}
  \zeta\mapsto g\cdot \zeta = \frac{a\zeta +b}{c\zeta +d},\;\; g=\left(
                                                                   \begin{array}{cc}
                                                                     a & b \\
                                                                     c & d \\
                                                                   \end{array}
                                                                 \right),\;\; \det(g) =ad-bc=1.
\end{equation}
The finite-dimensional irreducible representation $\pi_{n,p}(g)$ of $SL(2,\mathbb{C})$ associated with above action in \eqref{mob} parameterized by pairs of non-negative integers $n,p$ may be described as follows. Let
$$\mathcal{P}^{n,p}:=\left\{P(z_1,z_2)| P(\alpha z_1,\alpha z_2)=\alpha^{n}\bar{\alpha}^{p}P(z_1,z_2),\;\;\alpha\in\mathbb{C}^{\times}\right\}$$
be the space of homogeneous polynomials of degrees $n$ and $p$ in the variables $z_1$ and $z_2$. The left regular representation of $SL(2,\mathbb{C})$ on $\mathcal{P}^{n,p}$ is given by
$$\pi_{n,p}(g)P(z_{1},z_{2})=P\left(g^{-1}\cdot \binom{z_{1}}{z_{2}}\right),\;\;\;\; g\in SL(2,\mathbb{C}).$$
(cf:~\cite{RW}, \S3-4:~54). The class of holomorphic irreducible representations of $SL(2,\mathbb{C})$ consists of representations of $\pi_{n}(g)$ ($\equiv \pi_{n,0}(g)$)
which acts on the space $\mathcal{P}^{n}$ of polynomials $\displaystyle P(z_{1},z_{2})=\sum_{k=0}^{n}c_{k}z_{1}^{k}z_{2}^{k}.$ For our purpose, we consider the following realization of $\mathcal{P}^{n,p}$ with the associated function $\widetilde{\psi}$ given by
$$P(z_1,z_2)=z_{1}^{n}\overline{z}_{1}^{p}\widetilde{\psi}(\frac{z_2}{z_1}).$$
Now put $z_1=1, z_2=\zeta.$ and define the function $\widetilde{\psi}$ as $\widetilde{\psi}(\zeta)=P(1,\zeta).$ Then, the left group action
\begin{equation}\label{mob2}
  g^{-1}\cdot \zeta =  g=\left(\begin{array}{cc} a & -c \\
                                                 -b& d \\
                                \end{array}   \right)\cdot \zeta=\left(\frac{a\zeta-c}{-b\zeta+d}\right),\;\; \zeta\in\mathbb{C}
\end{equation}
gives the one dimensional representation of $SL(2,\mathbb{C})$ on the space of polynomials $\mathcal{P}^{n}$ as
$$\pi_{n}(g)\widetilde{\psi}(\zeta)= (-b\zeta+d)^{n}\widetilde{\psi}\left(\frac{a\zeta-c}{-b\zeta+d}\right).$$
\begin{teo}[\cite{VVS}:~72] Let $\mathbf{B}$ be the subgroup of matrices of $G$ in the form $\left(\begin{array}{cc}
                                                                                                  a & b \\
                                                                                                          0 & a^{-1}
                                                                                                        \end{array}\right).$

$G$ acts on $\mathbb{CP}^1=\mathbb{C}\cup\{\infty\}$ by the M\"{o}bius transformation
$$m(g):~\zeta\mapsto \frac{a\zeta+b}{c\zeta+d}=m(g)(\zeta),\;\;g=\left(\begin{array}{cc}
                                                                                                        a & b \\
                                                                                                           c & d
                                \end{array}\right).$$
The group action $\zeta\mapsto g[\zeta]$ is given by
$$g[\zeta]=m((g^{-1})^{t})(\zeta)=\frac{d\zeta-c}{-b\zeta+a}.$$
For this action the stabilizer of $0$ is $\mathbf{B}$ so that $G/\mathbf{B}\cong \mathbb{CP}^{1}.$ We have a section defined on $\mathbb{C}$ given by $$s(z)=\left(\begin{array}{cc}
                                                                                                          1 & 1 \\
                                                                                                           -z & 1
                                \end{array}\right),\;\;s(z)[0]=z.$$
\end{teo}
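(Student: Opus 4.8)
The plan is to verify each assertion by direct computation, the conceptual core being the observation that $g\mapsto (g^{-1})^{t}$ is a group \emph{homomorphism} of $G$, so that twisting the ordinary M\"obius action by it again yields a genuine left action. First I would record that for $g=\begin{pmatrix} a & b\\ c & d\end{pmatrix}$ with $\det g = ad-bc=1$ one has $g^{-1}=\begin{pmatrix} d & -b\\ -c & a\end{pmatrix}$ and hence $(g^{-1})^{t}=\begin{pmatrix} d & -c\\ -b & a\end{pmatrix}$. Feeding this matrix into the M\"obius formula $m(\cdot)$ gives at once
$$g[\zeta]=m\big((g^{-1})^{t}\big)(\zeta)=\frac{d\zeta-c}{-b\zeta+a},$$
which is the stated formula.

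Next I would establish that $\zeta\mapsto g[\zeta]$ is a left action. Since inversion $g\mapsto g^{-1}$ and transposition $g\mapsto g^{t}$ are each anti-automorphisms of $G$, their composite $\phi(g):=(g^{-1})^{t}$ satisfies $\phi(g_1 g_2)=((g_1g_2)^{-1})^{t}=(g_2^{-1}g_1^{-1})^{t}=(g_1^{-1})^{t}(g_2^{-1})^{t}=\phi(g_1)\phi(g_2)$ and is therefore an automorphism of $G$. The ordinary M\"obius assignment $m$ is a homomorphism from $G$ into the group of fractional-linear transformations of $\mathbb{CP}^{1}$ under composition, so the composite $m\circ\phi$ is again a homomorphism; consequently $g_1[g_2[\zeta]]=(g_1 g_2)[\zeta]$ and we have a left action. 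This is where the one genuine subtlety lies: had $\phi$ been only an anti-homomorphism, one would obtain a right action instead, and the identification $G/\mathbf{B}$ below would have to be replaced by $\mathbf{B}\backslash G$.

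I would then compute the stabilizer of $0$. From the formula, $g[0]=\dfrac{-c}{a}$, which vanishes exactly when $c=0$ (the case $a=0$ being excluded, since $c=a=0$ contradicts $\det g=1$). Imposing $c=0$ in $ad-bc=1$ forces $d=a^{-1}$, so the stabilizer consists precisely of the matrices $\begin{pmatrix} a & b\\ 0 & a^{-1}\end{pmatrix}$, i.e.\ the subgroup $\mathbf{B}$. Because the classical M\"obius action of $SL(2,\mathbb{C})$ is transitive on $\mathbb{CP}^{1}$ and $\phi$ is an automorphism, the twisted action $g[\cdot]$ is transitive as well; the orbit map $g\mapsto g[0]$ is then a surjective holomorphic $G$-equivariant map whose fibres are the left cosets of $\mathbf{B}$, and by the orbit–stabilizer principle for transitive smooth actions of a Lie group it descends to a biholomorphism $G/\mathbf{B}\xrightarrow{\sim}\mathbb{CP}^{1}$.

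Finally, for the section I would simply evaluate the action of $s(z)=\begin{pmatrix} 1 & 1\\ -z & 1\end{pmatrix}$ on $0$: reading off $a=1$ and $c=-z$ and using $g[0]=-c/a$ gives $s(z)[0]=z$, so $s$ is a holomorphic right inverse of the orbit map over the affine chart $\mathbb{C}\subset\mathbb{CP}^{1}$. I expect the only point requiring care beyond the homomorphism-versus-anti-homomorphism bookkeeping to be the normalization of this section: since only the first column $(a,c)$ enters $g[0]$, the value $s(z)[0]=z$ is insensitive to the second column, so the identity $s(z)[0]=z$ holds as stated, while choosing the second column so that $\det=1$ (for instance $b=0,\,d=1$) realizes $s$ inside $SL(2,\mathbb{C})$ proper.
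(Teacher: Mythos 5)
Your proposal is correct, and its computational core (compute $g^{-1}$ using $\det g=1$, transpose, feed into $m(\cdot)$, then evaluate at $\zeta=0$) is the same as the paper's. But you supply several steps that the paper's own proof omits or elides, and these are worth noting. First, the paper never actually computes the stabilizer of $0$: after deriving $g[\zeta]=\frac{d\zeta-c}{-b\zeta+a}$ it passes directly to the section $s(z)$, computes $s(z)[0]=z$, and then concludes \enquote{therefore the stabilizer of $0$ is $\mathbf{B}$,} which does not follow from what precedes it. Your argument --- $g[0]=-c/a=0$ forces $c=0$, hence $d=a^{-1}$ from the determinant condition --- is the missing step, and your observation that $\phi(g)=(g^{-1})^{t}$ is an automorphism (composite of two anti-automorphisms), so that the twisted assignment is a genuine \emph{left} action and the quotient is $G/\mathbf{B}$ rather than $\mathbf{B}\backslash G$, addresses a point the paper takes for granted. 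Second, your remark about the normalization of the section is a genuine catch: the matrix $s(z)=\left(\begin{smallmatrix}1&1\\-z&1\end{smallmatrix}\right)$ as printed has determinant $1+z$, so it does not lie in $SL(2,\mathbb{C})$ for general $z$; the identity $s(z)[0]=z$ survives because only the first column enters $g[0]$, but to have an honest section of $G\to G/\mathbf{B}$ one should take, e.g., $b=0$, $d=1$ (which is what the cited source does). In short, your proof is not merely an alternative route --- it repairs the paper's argument at the one place where the paper asserts rather than proves.
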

\begin{proof}
Since $g\in \mathbf{B}\subset G=SL(2,\mathbb{C}), \det(g)=ad-bc=1$ and $$g^{-1}=\frac{1}{ad-bc}\left(\begin{array}{cc}
                                                                                                          d & -b \\
                                                                                                           -c & a
                                \end{array}\right)=\left(\begin{array}{cc}
                                                                                                          d & -b \\
                                                                                                           -c & a
                                \end{array}\right)$$
and
$$(g^{-1})^{\top}=\left(\begin{array}{cc}
                                                                                                          d & -c \\
                                                                                                           -b & a
                                \end{array}\right).$$
Thus by Mobi\"{u}s transformation $$g[\zeta]=m((g^{-1})^{\top})=\frac{d\zeta-c}{-b\zeta+a}.$$ Let $g$ be a $\mathbb{C}$-segment
\[s(z)=\left(\begin{array}{cc}
             1 & 1 \\
            -z & 1 \\
             \end{array}\right),\]
then
\[s(z)[\zeta]=\frac{\zeta+z}{-\zeta+1}=\frac{\zeta+z}{1-\zeta}\]
whence $s(z)[0]=z$ and $\vd s(z)[0]=\vd z.$ Therefore, the stabilizer of $0$ is $\mathbf{B}$ as required.
\end{proof}
\begin{rem} The measure $\vd x\vd y=\dfrac{i}{2}\vd z\vd \overline{z}$ is quasi-invariant  under the action of $G$ since $z'=\dfrac{az+b}{cz+d}$ gives $\vd z'=(cz+d)^{-2}\vd z$ and thus 
$$\vd x'\vd y'=|cz+d|^{-4}\vd x\vd y=\frac{i}{2}|cz+d|^{-4}\vd z\vd \overline{z}=:\vd g[z].$$
The quasi-invariant measure $\vd g[z]$ is required when handling differential expressions involving the Laplacian $\Delta:=\dfrac{\partial^{2}}{\partial x^{2}}+\dfrac{\partial^{2}}{\partial y^{2}}= 4\dfrac{\partial}{\partial z}\dfrac{\partial}{\partial \overline{z}}$ (see~\cite{WR}, \S11.3, p.223) but the Heun operator in this case does not involve $\Delta.$ However, considering the Borel set $\left(
                                      \begin{array}{cc}
                                        a & b \\
                                        0 & a^{-1} \\
                                      \end{array}
                                    \right)\in
\mathbf{B}\subset SL(2,\mathbb{C}), c=0,d=a^{-1}$ so that $\vd z'=(a^{-1})^{-2}\vd z=a^{2}\vd z,a=\varsigma_{n}^{k}=\exp\left(i\dfrac{2\pi k }{n}\right)\in \mathbb{C}^{\times}, k=0,\ldots,n-1, n\in\mathbb{Z}.$  The special case, suitable for defining the measure required for defining the Fourier transform will require that $n=2$ for which $a=\pm 1$ so that $\vd z'=\vd z$. It is also known (~\cite{MCM}, p.4) that some other subsets of $SL(2,\mathbb{C})$ up to conjugacy include $SL(2,\mathbb{C})$, compact subgroup $K=SU(2)$,unipotent matrices $N=\mathbb{C}$ and  diagonal matrices $A=\mathbb{C}^{\times}.$ It is also good to note here that $\mathbf{B}=AN.$ The choice of the measure on the Borel set $\mathbf{B}$ here is intuitive for the purpose of defining a suitable Fourier transform for obtaining a distributional solution of the Lie algebraic operator and its Green function.
\end{rem}

\section{Main Results}\label{GrHu}
In what follows, we present the distributional solution associated with Lie algebraic HDO, $H_{j}.$
\begin{teo} Consider the Lie algebraic HDO, $H_{j}\in U_{sl(2)}$. The a distributional solution $\Psi(z)$ associated to $H_{j}$ is given by
$$\Psi(z)=\sum_{k=0}^{\infty}c_{k}\delta^{(k)}(z).$$
provided that $c_{k}$ has either of these values
$$c_{k}=A(\varepsilon_{k,j}^{(1)}+\varepsilon_{k,j}^{(2)})^k+B(\varepsilon_{k,j}^{(1)}-\varepsilon_{k,j}^{(2)})^k$$
or
$$c_{k}=A[\eta_{k,j}^{(1)}+\eta_{k,j}^{(2)}]^{k}+B[\eta_{k,j}^{(1)}-\eta_{k,j}^{(2)}]^{k},$$
where,
\begin{eqnarray*}
 \varepsilon_{k,j}^{(1)}  &=& \frac{\rho_j(k+1)_{l_j+1}-\tau_j(k+1)_{l_j-1}}{2\alpha_j\beta_j(k)_{l_j}}, \\
 \varepsilon_{k,j}^{(2)} &=& \frac{\sqrt{\left(\rho_j(k+1)_{l_j+1}-\tau_j(k+1)_{l_j-1}\right)^2-4\alpha_j\beta_j(k)_{l_j}\big[(k+2)_{l_j+2}-a(k+2)_{l_j}\big]}}{2\alpha_j\beta_j(k)_{l_j}},\\
  \eta_{k,j}^{(1)} &=& -\frac{(k+1)_{l_j}\sigma_j}{E_j(k)_{l_j-1}}, \\
 \eta_{k,j}^{(2)}  &=& \frac{\sqrt{((k+1)_{l_j}\sigma_j)^2-4( E_j(k)_{l_j-1})([(1+a)(k+2)_{l_j+1}-a(k+2)_{l_j}])}}{E_j(k)_{l_j-1}}.
\end{eqnarray*}
Provided in each case, respectively,  $k\geq l_{j}$ and $k\geq l_{j-1}.$
\end{teo}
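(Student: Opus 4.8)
The plan is to start from the explicit differential realization of the Lie-algebraic operator furnished by the first theorem,
$$-H_{j}=z(z-1)(z-a)D^{2}+[\rho_{j}z^{2}+\sigma_{j}z+\tau_{j}]D+\alpha_{j}\beta_{j}z,\qquad D=\vd/\vd z,$$
to expand $z(z-1)(z-a)=z^{3}-(1+a)z^{2}+az$, and to impose the distributional equation $-H_{j}\Psi=0$ on the ansatz $\Psi=\sum_{k\ge 0}c_{k}\delta^{(k)}$. The computation rests on two elementary rules for the derivatives of the Dirac mass: differentiation raises the order, $D\delta^{(k)}=\delta^{(k+1)}$, whereas multiplication by a monomial lowers it,
$$z^{m}\delta^{(k)}(z)=(-1)^{m}\frac{k!}{(k-m)!}\,\delta^{(k-m)}(z)\quad(k\ge m),\qquad z^{m}\delta^{(k)}=0\quad(k<m),$$
both of which follow from the defining pairing $\langle\delta^{(k)},\phi\rangle=(-1)^{k}\phi^{(k)}(0)$ together with the Leibniz rule. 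Applying them term by term turns each summand $z^{p}D^{q}\delta^{(k)}$ into a scalar multiple of a single $\delta^{(k+q-p)}$.

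First I would substitute the ansatz and rewrite each monomial-times-derivative block; for example $z^{3}D^{2}\delta^{(k)}=-\frac{(k+2)!}{(k-1)!}\delta^{(k-1)}$, $\rho_{j}z^{2}D\delta^{(k)}=\rho_{j}(k+1)k\,\delta^{(k-1)}$, $azD^{2}\delta^{(k)}=-a(k+2)\delta^{(k+1)}$, and similarly for the remaining terms. Because the family $\{\delta^{(m)}\}_{m\ge 0}$ is linearly independent, setting the total coefficient of each $\delta^{(m)}$ to zero produces a three-term recurrence
$$P_{j}(m)\,c_{m+1}+Q_{j}(m)\,c_{m}+R_{j}(m)\,c_{m-1}=0,$$
whose coefficients gather the grade-raising contributions ($z^{3}D^{2}$, $\rho_{j}z^{2}D$, $\alpha_{j}\beta_{j}z$) into $P_{j}$, the grade-preserving ones ($(1+a)z^{2}D^{2}$, $\sigma_{j}zD$, and, for the eigenvalue version, the $E_{j}$ term) into $Q_{j}$, and the grade-lowering ones ($azD^{2}$, $\tau_{j}D$) into $R_{j}$. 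Rewriting the factorial ratios with rising factorials recasts $P_{j},Q_{j},R_{j}$ exactly as the Pochhammer combinations $\alpha_{j}\beta_{j}(k)_{l_{j}}$, $\rho_{j}(k+1)_{l_{j}+1}-\tau_{j}(k+1)_{l_{j}-1}$, $(k+2)_{l_{j}+2}-a(k+2)_{l_{j}}$ appearing in the statement and, under the complementary grouping driven by $\sigma_{j}$ and $E_{j}$, as $E_{j}(k)_{l_{j}-1}$, $(k+1)_{l_{j}}\sigma_{j}$, $(1+a)(k+2)_{l_{j}+1}-a(k+2)_{l_{j}}$.

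The decisive step is to solve this recurrence. Looking for geometric solutions $c_{k}\propto\lambda^{k}$ and cancelling the common power reduces it, at each admissible level, to the quadratic characteristic equation $\alpha_{j}\beta_{j}(k)_{l_{j}}\lambda^{2}-[\rho_{j}(k+1)_{l_{j}+1}-\tau_{j}(k+1)_{l_{j}-1}]\lambda+[(k+2)_{l_{j}+2}-a(k+2)_{l_{j}}]=0$ in the first case, and to its $\eta$-analogue built from $E_{j}$ and $\sigma_{j}$ in the second. The quadratic formula then delivers the two roots $\varepsilon_{k,j}^{(1)}\pm\varepsilon_{k,j}^{(2)}$ (respectively $\eta_{k,j}^{(1)}\pm\eta_{k,j}^{(2)}$), with $\varepsilon_{k,j}^{(1)}$ the half-sum of the roots and $\varepsilon_{k,j}^{(2)}$ their half-difference exactly as written, and superposing the two independent branches yields $c_{k}=A(\varepsilon_{k,j}^{(1)}+\varepsilon_{k,j}^{(2)})^{k}+B(\varepsilon_{k,j}^{(1)}-\varepsilon_{k,j}^{(2)})^{k}$ together with its $\eta$ counterpart; the ranges $k\ge l_{j}$ and $k\ge l_{j-1}$ simply record where the Pochhammer denominators $(k)_{l_{j}}$ and $(k)_{l_{j}-1}$ are nonzero, so that the expressions are well defined. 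The hard part will be precisely this reduction: since $P_{j},Q_{j},R_{j}$ genuinely depend on $m$, the characteristic-equation argument is not a literal constant-coefficient computation, and I expect the main work to lie in choosing the shift parameter $l_{j}$ so that the Pochhammer indices align and the three-term recurrence factors through a single quadratic, and in arguing that the $\varepsilon$- and $\eta$-balances between them exhaust the admissible coefficient sequences.
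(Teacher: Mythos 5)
Your overall strategy---substitute $\Psi=\sum_k c_k\delta^{(k)}$ into the differential expression, use $z^m\delta^{(k)}=(-1)^m\frac{k!}{(k-m)!}\delta^{(k-m)}$, and read off a recurrence from linear independence of the $\delta^{(m)}$---is the classical Littlejohn--Kanwal route, but it is \emph{not} the route the paper takes, and it cannot produce the formulas in the statement. The paper does not work in $z$-space at all: it pairs $(H_j-E_j)\Psi$ against characters $\chi_u(z)=e^{izu}$ with respect to the weighted measure $\omega(z)\,\vd z$, where $\omega(z)=z^{\rho_j-1}(z-1)^{\sigma_j-1}(z-a)^{\tau_j-1}=\sum_{m,n}h^j_{mn}z^{m+n+\rho_j-1}$. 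It is precisely this monomial expansion of the weight that injects the index $l_j=m+n+\rho_j$ into every Pochhammer symbol ($(k)_{l_j}$, $(k+1)_{l_j\pm1}$, $(k+2)_{l_j+2}$, \dots). In your unweighted direct substitution the Pochhammer orders are forced to be the fixed small integers coming from the monomial degrees and derivative orders alone --- e.g.\ $z^3D^2\delta^{(k)}=-(k+2)(k+1)k\,\delta^{(k-1)}=-(k+2)_3\,\delta^{(k-1)}$ --- so there is no mechanism for a free parameter $l_j$ to appear, and no amount of ``choosing the shift parameter so the indices align'' (which you yourself flag as the hard part) will manufacture it. That hoped-for alignment is the gap: the quantity $l_j$ is an artifact of the weight function, which your setup omits entirely.

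The second genuine discrepancy is how one equation becomes two. In the paper, the Fourier transform attaches different powers of $-i$ to the various terms (each $z\mapsto -i\,\vd/\vd u$, each $D\mapsto -iu$), and after collecting, the single transformed identity \eqref{dsn1} splits into the two three-term recurrences \eqref{dsn3a} and \eqref{dsn3b} by setting the real and imaginary parts to zero separately; that is where the $\varepsilon$-branch (built from $\rho_j$, $\tau_j$, $\alpha_j\beta_j$) and the $\eta$-branch (built from $\sigma_j$, $E_j$, $(1+a)$) come from. Your proposed substitute --- sorting terms into grade-raising, grade-preserving and grade-lowering groups and letting each group generate its own recurrence --- is not a valid inference in the direct approach: there all contributions to the coefficient of a given $\delta^{(m)}$ sum to a \emph{single} linear relation, and nothing entitles you to demand that two sub-collections of that sum vanish independently. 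Once the two recurrences are in hand, your final step (geometric ansatz $c_k=s^k$, divide by $s^{k-2}$, quadratic formula, superpose the two roots, note where the denominators $(k)_{l_j}$ and $(k)_{l_j-1}$ are nonzero) does match the paper's computation exactly; the failure is upstream, in how the recurrences and their $l_j$-dependent coefficients are supposed to arise.
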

\begin{proof}
Let $\Psi(z)$ be the distributional solution of the Lie-algebraic Heun-type equation
 \begin{equation}\label{qhf1}
 (H_{j}-E_{j})\Psi(z)=0.
 \end{equation}
 Let $\Gamma:=\mathbb{CP}^1\setminus\{0,1,a,\infty\}$ be an open subset of the set of complex numbers, $\mathbb{C}$. Consider the Hilbert space $\mathfrak{H}=L^2(\Gamma,\vd\mu_{\omega}(z))$ and its Fourier transformed space $\widetilde{\mathfrak{H}}=L^2(\widetilde{\Gamma},\vd\mu_{\omega} (u)), \widetilde{\Gamma}\subset\widehat{\mathbb{C}}$. Let the character of $\mathbb{C}$ be given as
 $$\chi_{u}(z)=e^{iz\cdot u},\;\;z,u\in\mathbb{C}.$$
 The Fourier transform (isometry) $\mathscr{F}:~\mathfrak{H}\rightarrow\widetilde{\mathfrak{H}}$ is defined by
$$\langle f,\chi_{u}(z)\rangle_{\omega}=\mathscr{F}[f(u)]=\frac{1}{4\pi^{2}}\int_{\Gamma}f(z)\chi_{-u}(z)\,\omega(z)\vd z,\;\;\Gamma\subset \mathbb{C}$$
(Some references use $-iz\cdot u$ instead of $iz\cdot u,\;i=\sqrt{-1},z\in\mathbb{C}, \widehat{\mathbb{C}}$ is the dual of $\mathbb{C}$). The Fourier scaling factor here is $4\pi^2$ because $\mathbb{C}\cong \mathbb{R}^{2}$. Here, $\widehat{\mathbb{C}}$ is defined as
$$\widehat{\mathbb{C}}:=\{1,i,u, iu|u^2=0=i^2u^2, iu\cdot u=0=u\cdot iu\}.$$
In what follows in this section, a basic assumption that $\sigma_j,\tau_j, (a\neq 0,1)\in\mathbb{Z}$ is required. The weight function associated with $H_j$ is given by $$\omega(z)=z^{\rho_j-1}(z-1)^{\sigma_j-1}(z-a)^{\tau_j-1}=\sum_{m=0}^{\sigma_j-1}\sum_{n=0}^{\tau_j-1}h_{mn}^{j}z^{m+n+\rho_j-1}$$
where
$$h_{mn}^{j}=\binom{\sigma_j-1}{m}\binom{\tau_j-1}{n}(-1)^{\sigma_j+\tau_j+m+n}a^{\tau_j-n-1}.$$
The domain of $H_j$ denoted by $Dom(H_j)=\mathscr{C}_{c}^{\infty}(\Gamma)\subset \mathfrak{H}\subset \mathscr{D}'(\Gamma)$ where $\mathscr{D}'(\Gamma)$ is the space of distributions on $\Gamma$. This implies that $H_j$ admits distributional solutions, say $\displaystyle\Psi(z)=\sum_{k=0}^{\infty}c_{k}\delta^{(k)}(z), \;\; ^{(k)}=\frac{\vd^{k}}{\vd z^{k}}$. To obtain the values of $a_k$, one sets
$$\big\langle(H_j-E_j)\Psi(z),\chi_{u}(z)\big\rangle_\omega=0.$$
This yields
\begin{multline}\label{dsn}
  \sum_{k=0}^{\infty}\sum_{m=0}^{\sigma_j-1}\sum_{n=0}^{\tau_j-1} c_k h_{mn}^{j}\bigg[\bigg\{\left(-i\frac{\vd}{\vd u}\right)^{m+n+\rho_j+2}
  -(1+a)\left(-i\frac{\vd}{\vd u}\right)^{m+n+\rho_j+1}\\
  +a\left(-i\frac{\vd}{\vd u}\right)^{m+n+\rho_j}\bigg\}(-iu)^2
  +\bigg\{\rho_j\left(-i\frac{\vd}{\vd u}\right)^{m+n+\rho_j+1}+\sigma_j\left(-i\frac{\vd}{\vd u}\right)^{m+n+\rho_j}\\
  +\tau_j\bigg(-i\frac{\vd}{\vd u}\bigg)^{m+n+\rho_j-1}\bigg\}(-iu)
  +\alpha_j\beta_j\left(-i\frac{\vd}{\vd u}\right)^{m+n+\rho_j}-E_j\left(-i\frac{\vd}{\vd u}\right)^{m+n+\rho_j-1}\bigg](-iu)^{k}=0.
\end{multline}
Equation~\eqref{dsn} becomes
\begin{eqnarray}\label{dsn1-1}
 && \sum_{k=0}^{\infty}\sum_{m=0}^{\sigma_j-1}\sum_{n=0}^{\tau_j-1} c_k h_{mn}^{j}(-i)^{m+n+\rho_j+k}\bigg[\bigg\{(-i)^{4}\left(\frac{\vd}{\vd u}\right)^{m+n+\rho_j+2}u^{k+2}\nonumber\\
 && -(1+a)(-i)^3\left(\frac{\vd}{\vd u}\right)^{m+n+\rho_j+1}u^{k+2}
  +(-i)^{2} a\left(\frac{\vd}{\vd u}\right)^{m+n+\rho_j}u^{k+2}\bigg\}\nonumber\\
  &&+\bigg\{\rho_j(-i)^{2}\left(\frac{\vd}{\vd u}\right)^{m+n+\rho_j+1}u^{k+1}+\sigma_j(-i)\left(\frac{\vd}{\vd u}\right)^{m+n+\rho_j}u^{k+1}
  +\tau_j\bigg(\frac{\vd}{\vd u}\bigg)^{m+n+\rho_j-1}u^{k+1}\bigg\}\nonumber\\
  &&+\alpha_j\beta_j\left(\frac{\vd}{\vd u}\right)^{m+n+\rho_j}u^{k}-E_j(-i)^{-1}\left(\frac{\vd}{\vd u}\right)^{m+n+\rho_j-1}u^{k}\bigg]=0.
\end{eqnarray}
Equation~\eqref{dsn1-1} in terms of Pochhammer symbol is expressed as
\begin{multline}\label{dsn1}
  \sum_{k=0}^{\infty}\sum_{m=0}^{\sigma_j-1}\sum_{n=0}^{\tau_j-1} (-i)^{m+n+\rho_j+k}h_{mn}^{j}\\
  \times\bigg[\bigg\{(k+2)_{m+n+\rho_j+2}
  -i(1+a)(k+2)_{m+n+\rho_j+1}-a(k+2)_{m+n+\rho_j}\bigg\}c_{k-2} \\
  -\bigg\{\rho_j(k+1)_{m+n+\rho_j+1}-i(k+1)_{m+n+\rho_j}\sigma_j-\tau_j(k+1)_{m+n+\rho_j-1}\bigg\}c_{k-1}\\
  +[\alpha_j\beta_j(k)_{m+n+\rho_j}-i E_j(k)_{m+n+\rho_j-1}]c_k\bigg]=0.
\end{multline}
Here, $(k)_m=k(k-1)(k-2)\cdots(k-m+1).$ From equation~\eqref{dsn1}, one obtains 2 three-term recurrence relations by equating the real part and imaginary parts to zero respectively
\begin{multline}\label{dsn2}
 \bigg[\big[(k+2)_{m+n+\rho_j+2}-a(k+2)_{m+n+\rho_j}\big]c_{k-2} \\
  -\big[\rho_j(k+1)_{m+n+\rho_j+1}-\tau_j(k+1)_{m+n+\rho_j-1}\big]c_{k-1}\\
  +[\alpha_j\beta_j(k)_{m+n+\rho_j}]c_k\bigg]=0.
\end{multline}
and
\begin{equation}\label{dsn3}
-((1+a)(k+2)_{m+n+\rho_j+1}-a(k+2)_{m+n+\rho_j})c_{k-2}  +(k+1)_{m+n+\rho_j}\sigma_jc_{k-1} + E_j(k)_{m+n+\rho_j-1}c_k=0.
\end{equation}
 For convenience we set $l_j=m+n+\rho_j$ so that equation~\eqref{dsn2} and \eqref{dsn3} respectively become
 \begin{multline}\label{dsn3a}
 \bigg[\big[(k+2)_{l_j+2}-a(k+2)_{l_j}\big]c_{k-2} -\big[\rho_j(k+1)_{l_j+1}-\tau_j(k+1)_{l_j-1}\big]c_{k-1}  +[\alpha_j\beta_j(k)_{l_j}]c_k\bigg]=0
\end{multline}
and
\begin{equation}\label{dsn3b}
-[(1+a)(k+2)_{l_j+1}-a(k+2)_{l_j}]c_{k-2}  +(k+1)_{l_j}\sigma_jc_{k-1} + E_j(k)_{l_j-1}c_k=0.
\end{equation}
Solving the recurrence equation~\eqref{dsn3a} by setting  $c_{k}=s^{k}$, we have
$$\bigg[\big[(k+2)_{l_j+2}-a(k+2)_{l_j}\big]s^{k-2} -\big[\rho_j(k+1)_{l_j+1}-\tau_j(k+1)_{l_j-1}\big]s^{k-1}  +[\alpha_j\beta_j(k)_{l_j}]s^k\bigg]=0.$$
Dividing through by $s^{k-2}$ gives
$$\bigg[\big[(k+2)_{l_j+2}-a(k+2)_{l_j}\big] -\big[\rho_j(k+1)_{l_j+1}-\tau_j(k+1)_{l_j-1}\big]s  +[\alpha_j\beta_j(k)_{l_j}]s^2\bigg]=0.$$
Solving the resulting quadratic equation yields
\begin{multline*}
 s=\frac{\rho_j(k+1)_{l_j+1}-\tau_j(k+1)_{l_j-1}}{2\alpha_j\beta_j(k)_{l_j}}\pm \\
 \frac{\sqrt{\left(\rho_j(k+1)_{l_j+1}-\tau_j(k+1)_{l_j-1}\right)^2-4\alpha_j\beta_j(k)_{l_j}\big[(k+2)_{l_j+2}-a(k+2)_{l_j}\big]}}{2\alpha_j\beta_j(k)_{l_j}}
\end{multline*}
Setting $s=\varepsilon_{k,j}^{(1)}\pm \varepsilon_{k,j}^{(2)}$ so that
\begin{eqnarray*}
 \varepsilon_{k,j}^{(1)}  &=& \frac{\rho_j(k+1)_{l_j+1}-\tau_j(k+1)_{l_j-1}}{2\alpha_j\beta_j(k)_{l_j}} \\
 \varepsilon_{k,j}^{(2)} &=& \frac{\sqrt{\left(\rho_j(k+1)_{l_j+1}-\tau_j(k+1)_{l_j-1}\right)^2-4\alpha_j\beta_j(k)_{l_j}\big[(k+2)_{l_j+2}-a(k+2)_{l_j}\big]}}{2\alpha_j\beta_j(k)_{l_j}}
\end{eqnarray*}
we get the coefficient
$$c_{k}=A(\varepsilon_{k,j}^{(1)}+\varepsilon_{k,j}^{(2)})^{k}+B(\varepsilon_{k,j}^{(1)}-\varepsilon_{k,j}^{(2)})^{k}$$
where $A+B=1.$ We have non-trivial solution here provided $k\geq l_{j}$.

Alternatively, we consider the recurrence equation from the imaginary part given in equation~\eqref{dsn3b} and set $c_{k}=t^{k}$ so that
$$-[(1+a)(k+2)_{l_j+1}-a(k+2)_{l_j}]t^{k-2}  +(k+1)_{l_j}\sigma_jt^{k-1} + E_j(k)_{l_j-1}t^k=0.$$
Dividing through by $t^{k-2}$ we obtain
$$-[(1+a)(k+2)_{l_j+1}-a(k+2)_{l_j}]  +(k+1)_{l_j}\sigma_jt + E_j(k)_{l_j-1}t^2=0.$$
Solving the resulting quadratic equation we get
$$t=-\,\frac{(k+1)_{l_j}\sigma_j}{E_j(k)_{l_j-1}}\pm\frac{\sqrt{((k+1)_{l_j}\sigma_j)^2-4( E_j(k)_{l_j-1})([(1+a)(k+2)_{l_j+1}-a(k+2)_{l_j}])}}{E_j(k)_{l_j-1}}.$$
Let $t=\eta_{k,j}^{(1)}\pm\eta_{k,j}^{(2)}$ so that
\begin{eqnarray*}
  \eta_{k,j}^{(1)} &=& -\frac{(k+1)_{l_j}\sigma_j}{E_j(k)_{l_j-1}}, \\
 \eta_{k,j}^{(2)}  &=& \frac{\sqrt{((k+1)_{l_j}\sigma_j)^2-4( E_j(k)_{l_j-1})([(1+a)(k+2)_{l_j+1}-a(k+2)_{l_j}])}}{E_j(k)_{l_j-1}}.
\end{eqnarray*}
Thus, $$c_{k}=A[\eta_{k,j}^{(1)}+\eta_{k,j}^{(2)}]^{k}+B[\eta_{k,j}^{(1)}-\eta_{k,j}^{(2)}]^{k},$$
where $A+B=1.$ In this non-trivial solutions are obtained only when $k\geq l_{j}-1.$
The distributional solution $\Psi(z)$ is  therefore be written as
$$ \Psi(z)=\sum_{k=0}^{\infty}c_{k}\delta^{(k)}(z). $$
\end{proof}
 The next result leads to the computation of Green kernels $G\equiv G(z,w)$ and spectral shift function of the HDO $H_{\frac{n}{2},e}$. The technique adopted here is close in spirit with the one applied by~\cite{DGD}.
\begin{teo} Consider the exactly solvable Heun operator $H_{\frac{n}{2},e}$. By Fourier transform approach, the Green function associated with $H_{\frac{n}{2},e}$ is given by
$$  G(z,w)=\sum_{m=1}^{p} \frac{(i w)^{m-1}}{(m-1)!}\sum_{m=1}^{p}\sum_{k=0}^{\sigma_{\frac{n}{2}}-1}\sum_{l=0}^{\tau_{\frac{n}{2}}-1}h_{kl}a^{-l}(-1)^{m-1}
 \epsilon_{0} (m-1)!\delta(z).$$
with $\epsilon_{0}\in \mathbb{C}$ is a constant to be determined and $\displaystyle\phi(w)=\sum_{m=1}^{p}(iw)^{m-1}$. The spectral shift function in this case is
$$\xi^{\pm}(\lambda,H_{\frac{n}{2},e},H_{0})=G^{\pm}(w)\mathbb{H}(\lambda),w\in\Omega^{\pm}$$
where $\Omega^{\pm}\subset\mathbb{C}_{\pm}:=\{z\in\mathbb{C}|\pm\Im z>0\}$ and  $G^{\pm}(w)=G^{\pm}(w,w)$ given by
$$ G^{\pm}(w)= \sum_{m=1}^{p}\sum_{k=0}^{\sigma_{\frac{n}{2}}-1}\sum_{l=0}^{\tau_{\frac{n}{2}}-1}h_{kl}a^{-l}(-1)^{m-1}
  \epsilon_{0} \delta(w).$$
\end{teo}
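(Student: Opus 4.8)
The plan is to realise $G(z,w)$ as the distributional fundamental solution of the exactly solvable operator, i.e. the kernel solving
$$H_{\frac{n}{2},e}\,G(z,w)=\delta(z-w)$$
in $\mathscr{D}'(\Gamma)$, and to transport this equation into the Fourier picture $\widetilde{\mathfrak{H}}$ with exactly the machinery already used in the distributional-solution theorem, in the spirit of~\cite{DGD}. First I would apply the isometry $\mathscr{F}$ in the variable $z$ and pair against the character $\chi_{-u}(z)=e^{-iz\cdot u}$. Since the explicit form $-H_{\frac{n}{2},e}=z(z-1)(z-a)D^{2}+[\rho_{\frac{n}{2}}z^{2}+\sigma_{\frac{n}{2}}z+\tau_{\frac{n}{2}}]D+\alpha_{\frac{n}{2}}\beta_{\frac{n}{2}}z$ together with the weight expansion $\omega(z)=\sum_{k,l}h_{kl}\,z^{k+l+\rho_{\frac{n}{2}}-1}$ already records how each multiplication $z\mapsto -i\,\vd/\vd u$ and each $\delta^{(k)}(z)\mapsto(-iu)^{k}$ behaves, the left-hand side becomes the same polynomial-coefficient operator in $u$ acting on $\widetilde{G}(u,w)$. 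The only genuinely new ingredient is the source: here $\mathscr{F}[\delta(\cdot-w)](u)=\tfrac{1}{4\pi^{2}}\,\omega(w)\,e^{-iw\cdot u}$, and expanding the exponential in its power series produces the $w$-envelope $\phi(w)=\sum_{m=1}^{p}(iw)^{m-1}$, the truncation at $p$ being forced by the antiholomorphic degree of the representation $\pi_{n,p}$.

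Second, I would solve the transformed equation for $\widetilde{G}(u,w)$ monomial by monomial in the $w$-series and then invert $\mathscr{F}$. Each power $u^{m-1}$ inverts to $\delta^{(m-1)}(z)$, but the structure of the operator at the distinguished singular point $z=0$ concentrates the solution on the lowest-order term, so the inverse transform collapses to a multiple of $\delta(z)$. Collecting the surviving weight data $h_{kl}a^{-l}$, the alternating signs $(-1)^{m-1}$, and the single undetermined normalisation constant $\epsilon_{0}\in\mathbb{C}$ fixed by the residue of the delta source, I would recover the stated closed form for $G(z,w)$. Putting $z=w$ and invoking the distributional identity $w^{m-1}\delta(w)=0$ for $m\ge 2$ collapses the $w$-envelope and yields the diagonal values $G^{\pm}(w)=G^{\pm}(w,w)$ for $w\in\Omega^{\pm}\subset\mathbb{C}_{\pm}$.

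Third, for the spectral shift function I would use the perturbation-determinant representation $\xi(\lambda)=\tfrac{1}{\pi}\,\mathrm{Arg}\,\det\!\big((H_{\frac{n}{2},e}-\lambda)(H_{0}-\lambda)^{-1}\big)$ together with Krein's trace formula, specialised to the present situation in which the resolvent difference is controlled by the diagonal Green kernel. Taking boundary values from $\mathbb{C}_{+}$ and $\mathbb{C}_{-}$ produces the two branches $\xi^{\pm}$; since the transported operator concentrates its spectral content at a single point, the argument is piecewise constant in $\lambda$, and this jump is precisely the Heaviside factor $\mathbb{H}(\lambda)$ with prefactor $G^{\pm}(w)$.

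The step I expect to be the main obstacle is this last one. Spectral shift function theory is classically developed for self-adjoint pairs on the real line, whereas here the operators act on $\mathbb{CP}^{1}$ and the Green function is a genuine distribution rather than a locally integrable kernel. Giving rigorous meaning to $\mathrm{Tr}\big(f(H_{\frac{n}{2},e})-f(H_{0})\big)$, to the analytic continuation of $\xi$ off the real axis into $\Omega^{\pm}$, and to the assertion that the diagonal restriction of a delta-type kernel is the correct multiplier of $\mathbb{H}(\lambda)$, is where the genuine difficulty lies. By contrast the inversion in the second step is in principle routine, requiring only careful bookkeeping of which Pochhammer factors survive at $z=0$ and a clean determination of $\epsilon_{0}$.
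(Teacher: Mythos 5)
Your first two steps track the paper's actual argument closely: Fourier-transforming $H_{\frac{n}{2},e}G=\delta(z-w)$ against $\chi_{-s}$ with the weight $\omega(z)=\sum_{k,l}h_{kl}z^{k+l+\rho_{\frac{n}{2}}-1}$, turning each power of $z$ into $\vd/\vd s$, inverting the resulting constant-plus-derivative operator term by term, and collapsing the inverse transform via the identities $z^{m-1}\delta^{(m-1)}(z)=(-1)^{m-1}(m-1)!\,\delta(z)$ and $z^{m-1}\delta^{(m-2)}(z)=z^{m-1}\delta^{(m-3)}(z)=0$ (which is precisely why only the $\epsilon_{0}$ term survives --- not a statement about the operator ``concentrating'' at $z=0$). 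One concrete misstatement: the truncation index $p$ is not the antiholomorphic degree of $\pi_{n,p}$; in the paper $p=\sigma_{\frac{n}{2}}-\rho_{\frac{n}{2}}-l$ arises purely from re-indexing the double sum over the weight expansion by $m_{kl}=k+l+\rho_{\frac{n}{2}}$, and $\phi(w)=\sum_{m=1}^{p}(iw)^{m-1}$ is just the $w$-envelope of that re-indexed sum, not a series expansion of the transformed delta source.

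The genuine gap is the spectral shift function step, and you have in effect conceded it. You propose the perturbation-determinant/Krein route, $\xi=\tfrac{1}{\pi}\mathrm{Arg}\det\bigl((H_{\frac{n}{2},e}-\lambda)(H_{0}-\lambda)^{-1}\bigr)$, and then correctly observe that giving this meaning for non-self-adjoint operators on $\mathbb{CP}^{1}$ with a delta-type kernel is out of reach; no mechanism is supplied for why the answer should be $G^{\pm}(w)\mathbb{H}(\lambda)$. The paper does something different and entirely computational: it writes the diagonal Green function with its eigenvalue dependence made explicit, $G^{\pm}(E_{\frac{n}{2},e},w)=G^{\pm}(w,w)\cdot E_{\frac{n}{2},e}^{-1}$, applies the Privalov-type representation $\xi^{\pm}(\lambda)=\tfrac{1}{\pi}\lim_{\epsilon\downarrow 0}\mathrm{Arg}\,G^{\pm}(\lambda+i\epsilon,w)$ directly to the Green function (not to a determinant), factors the $w$-dependent amplitude out of the argument, and evaluates $\tfrac{1}{\pi}\lim_{\epsilon\downarrow 0}\tan^{-1}(-\epsilon/\lambda)=\mathbb{H}(\lambda)$ as the Heisenberg distribution. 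That single observation --- that the only $E$-dependence of the diagonal kernel is the overall factor $E_{\frac{n}{2},e}^{-1}$, so the argument reduces to $\mathrm{Arg}\,(\lambda+i\epsilon)^{-1}$ --- is the idea your proposal is missing, and without it the stated formula for $\xi^{\pm}$ cannot be reached along the path you describe.
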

\begin{proof}
Consider the non-homogeneous equation satisfied by Green function $G$
\begin{equation}\label{ghde}
  H_{\frac{n}{2},e}G=\delta(z-w)
\end{equation}
then
\begin{equation}\label{ghde2}
  \widehat{H_{\frac{n}{2},e}G}=\widehat{H_{\frac{n}{2},e}}\widehat{G}=\widehat{\delta(z-w)}.
\end{equation}
This yields
\begin{equation}\label{ghde3}
  \widehat{G}=\widehat{H_{\frac{n}{2},e}}^{-1}\widehat{\delta(z-w)}.
\end{equation}
Thus,
\begin{equation}\label{ghde4}
  G(z,w)=\mathscr{F}^{-1}\bigg[\widehat{H_{\frac{n}{2},e}}^{-1}\widehat{\delta(z-w)}\bigg]
\end{equation}
and trace value of its integral operator occurs when $z=w$ so that
\begin{equation}\label{ghde4a}
 G(w)=  G(w,w)=\mathscr{F}^{-1}\bigg[\widehat{H_{\frac{n}{2},e}}^{-1}\widehat{\delta(z-w)}\bigg]\bigg|_{z=w}
\end{equation}
The associated weight function for $H_{\frac{n}{2},e}$ is given by
\begin{eqnarray*}
  \omega(z) &=& z^{\rho_{\frac{n}{2}}-1}(z-1)^{\sigma_{\frac{n}{2}}-1}(z-a)^{\tau_{\frac{n}{2}}-1} \\
   &=& (-1)^{\sigma_{\frac{n}{2}}+\tau_{\frac{n}{2}}-2}a^{\tau_{\frac{n}{2}}-1}z^{\rho_{\frac{n}{2}}-1}\sum_{k=0}^{\sigma_{\frac{n}{2}}-1}\binom{\sigma_{\frac{n}{2}}-1}{k}z^k\sum_{l=0}^{\tau_{\frac{n}{2}}-1}\binom{\tau_{\frac{n}{2}}-1}{l}\left(\frac{z}{a}\right)^{l}  \\
   &=& (-1)^{\sigma_{\frac{n}{2}}+\tau_{\frac{n}{2}}-2}a^{\tau_{\frac{n}{2}}-1}z^{\rho_{\frac{n}{2}}-1}\sum_{k=0}^{\sigma_{\frac{n}{2}}-1}\sum_{l=0}^{\tau_{\frac{n}{2}}-1}\binom{\sigma_{\frac{n}{2}}-1}{k}\binom{\tau_{\frac{n}{2}}-1}{l}z^{k+l}a^{-l}   \\
   &=& (-1)^{\sigma_{\frac{n}{2}}+\tau_{\frac{n}{2}}-2}a^{\tau_{\frac{n}{2}}-1} \sum_{k=0}^{\sigma_{\frac{n}{2}}-1}\sum_{l=0}^{\tau_{\frac{n}{2}}-1}\binom{\sigma_{\frac{n}{2}}-1}{k}\binom{\tau_{\frac{n}{2}}-1}{l}z^{k+\rho_{\frac{n}{2}}+l-1}a^{-l}   \\
   &=& (-1)^{\sigma_{\frac{n}{2}}+\tau_{\frac{n}{2}}-2}a^{\tau_{\frac{n}{2}}-1} \sum_{k=0}^{\sigma_{\frac{n}{2}}-1}\sum_{l=0}^{\tau_{\frac{n}{2}}-1}h_{kl}z^{k+\rho_{\frac{n}{2}}+l-1}a^{-l},
\end{eqnarray*}
where $\displaystyle h_{kl}=\binom{\sigma_{\frac{n}{2}}-1}{k}\binom{\tau_{\frac{n}{2}}-1}{l}.$ In what follows, let $\chi_{-s},f:L^{2}(\Omega,\vd\mu_{w}(z))\rightarrow L^{2}(\Omega,\vd\mu_{w}(z)),$ $\Omega=\mathbb{CP}^{1}\setminus\{0,a,1,\infty\}$ then the Fourier transform
$$\widehat{f}:=\langle f,\chi_{s}\rangle_{\omega}=\int_{\Omega} f(z)\chi_{-s}(z)\vd\mu_{\omega}(z)$$
whence $\chi_{-s}(z)=e^{-is\cdot z}=\overline{\chi_{s}}(z)$ with $s=\sigma+i\tau, \tau=\Im s>0$ and Radon measure $\vd\mu_{\omega}(z)=\omega(z)\vd z.$ By using the inverse formula (cf:~\cite{VVS}, \S1.5, eq.(INV), p.8)
$$\reallywidecheck{f\,}=\mathscr{F}^{-1}\widehat{f}(z)=\int_{\Omega}\widehat{f}(s)\chi_{-s}(z)\vd\mu_{\omega}(z).$$

Computing the Fourier transforms of operators in \eqref{ghde4}, one obtains
\begin{eqnarray}
  \widehat{\delta(z-w)} &=& (-1)^{\sigma_{\frac{n}{2}}+\tau_{\frac{n}{2}}-2}a^{\tau_{\frac{n}{2}}-1} \sum_{k=0}^{\sigma_{\frac{n}{2}}-1}\sum_{l=0}^{\tau_{\frac{n}{2}}-1}h_{kl}a^{-l}\int_{\Omega}\delta(z-w)z^{k+\rho_{\frac{n}{2}}+l-1}\chi_{-s}(z)\vd z \nonumber \\
   &=& (-1)^{\sigma_{\frac{n}{2}}+\tau_{\frac{n}{2}}-2}a^{\tau_{\frac{n}{2}}-1} \sum_{k=0}^{\sigma_{\frac{n}{2}}-1}\sum_{l=0}^{\tau_{\frac{n}{2}}-1}h_{kl}a^{-l} w^{k+\rho_{\frac{n}{2}}+l-1}\chi_{-s}(w) \label{ghde5}
\end{eqnarray}
Next, it is obvious that
\begin{eqnarray*}
  \widehat{H_{\frac{n}{2},e}} &=& \mathscr{F}[H_{\frac{n}{2},e}] \\
   &=& \mathscr{F}\left[(z^3-(1+a)z^2+az)D^2+(\rho_{\frac{n}{2}}z^2+\sigma_{\frac{n}{2}}z+\tau_{\frac{n}{2}})D+\frac{n(n-1)}{2}z\right]
\end{eqnarray*}
can be simplified term-wisely as
\begin{eqnarray}
  \mathscr{F}[z^3D^2] &=& (-1)^{\sigma_{\frac{n}{2}}+\tau_{\frac{n}{2}}-2}a^{\tau_{\frac{n}{2}}-1}\sum_{k=0}^{\sigma_{\frac{n}{2}}-1}\sum_{l=0}^{\tau_{\frac{n}{2}}-1}h_{kl}a^{-l} \int_{\Omega} z^3D^2(z^{k+l+\rho_{\frac{n}{2}}-1}\chi_{-s}(z))\vd z\nonumber\\
   &=&(-1)^{\sigma_{\frac{n}{2}}+\tau_{\frac{n}{2}}-2}a^{\tau_{\frac{n}{2}}-1}\sum_{k=0}^{\sigma_{\frac{n}{2}}-1}\sum_{l=0}^{\tau_{\frac{n}{2}}-1}h_{kl}a^{-l}
   \int_{\Omega}[(k+l+\rho_{\frac{n}{2}}-1)(k+l+\rho_{\frac{n}{2}}-2)z^{k+l+\rho_{\frac{n}{2}}}\nonumber\\
   &&+2is(k+l+\rho_{\frac{n}{2}}-1)z^{k+l+\rho_{\frac{n}{2}}+1}-s^2z^{k+l+\rho_{\frac{n}{2}}+2}]\chi_{-s}(z)\vd z\nonumber\\
   &=&(-1)^{\sigma_{\frac{n}{2}}+\tau_{\frac{n}{2}}-2}a^{\tau_{\frac{n}{2}}-1}\sum_{k=0}^{\sigma_{\frac{n}{2}}-1}\sum_{l=0}^{\tau_{\frac{n}{2}}-1}h_{kl}a^{-l}
  [(k+l+\rho_{\frac{n}{2}}-1)(k+l+\rho_{\frac{n}{2}}-2)\left(\frac{\vd}{\vd s}\right)^{k+l+\rho_{\frac{n}{2}}}\nonumber\\
   &&+2is(k+l+\rho_{\frac{n}{2}}-1)\left(\frac{\vd}{\vd s}\right)^{k+l+\rho_{\frac{n}{2}}+1}-s^2\left(\frac{\vd}{\vd s}\right)^{k+l+\rho_{\frac{n}{2}}+2}];\label{ghde6}
\end{eqnarray}
\begin{eqnarray}
  -(1+a)\mathscr{F}[z^2D^2] &=& -(1+a)(-1)^{\sigma_{\frac{n}{2}}+\tau_{\frac{n}{2}}-2}a^{\tau_{\frac{n}{2}}-1}\sum_{k=0}^{\sigma_{\frac{n}{2}}-1}\sum_{l=0}^{\tau_{\frac{n}{2}}-1}h_{kl}a^{-l} \nonumber\\
  &&\times \int_{\Omega}z^{2}[(k+l+\rho_{\frac{n}{2}}-1)(k+l+\rho_{\frac{n}{2}}-2)z^{k+l+\rho_{\frac{n}{2}}-3}\nonumber\\
  &&+2is(k+l+\rho_{\frac{n}{2}}-1)z^{k+l+\rho_{\frac{n}{2}}-2}-s^2z^{k+l+\rho_{\frac{n}{2}}-1}]\chi_{-s}(z)\vd z\nonumber\\
   &=&  -(1+a)(-1)^{\sigma_{\frac{n}{2}}+\tau_{\frac{n}{2}}-2}a^{\tau_{\frac{n}{2}}-1}\nonumber\\
   &&\times \sum_{k=0}^{\sigma_{\frac{n}{2}}-1}\sum_{l=0}^{\tau_{\frac{n}{2}}-1}h_{kl}a^{-l} [(k+l+\rho_{\frac{n}{2}}-1)(k+l+\rho_{\frac{n}{2}}-2)\left(\frac{\vd}{\vd s}\right)^{k+l}\nonumber\\
&&+\rho_{\frac{n}{2}-1}+2is(k+l+\rho_{\frac{n}{2}}-1)\left(\frac{\vd}{\vd s}\right)^{k+l+\rho_{\frac{n}{2}}}-s^2\left(\frac{\vd}{\vd s}\right)^{k+l+\rho_{\frac{n}{2}}+1}];
   \label{ghde7}
\end{eqnarray}
\begin{eqnarray}
  a\mathscr{F}[zD^2] &=& a^{\tau_{\frac{n}{2}}}(-1)^{\sigma_{\frac{n}{2}}+\tau_{\frac{n}{2}}-2}\sum_{k=0}^{\sigma_{\frac{n}{2}}-1}\sum_{l=0}^{\tau_{\frac{n}{2}}-1}h_{kl}a^{-l}\int_{\Omega}z[(k+l+\rho_{\frac{n}{2}}-1)(k+l+\rho_{\frac{n}{2}}-2)z^{k+l+\rho_{\frac{n}{2}}-1}\nonumber\\
  &&+2is(k+l+\rho_{\frac{n}{2}}-1)z^{k+l+\rho_{\frac{n}{2}}}-s^2z^{k+l+\rho_{\frac{n}{2}}+1}]\chi_{-s}(z)\vd z;\nonumber\\
   &=& a^{\tau_{\frac{n}{2}}}(-1)^{\sigma_{\frac{n}{2}}+\tau_{\frac{n}{2}}-2}\sum_{k=0}^{\sigma_{\frac{n}{2}}-1}\sum_{l=0}^{\tau_{\frac{n}{2}}-1}h_{kl}a^{-l}[(k+l+\rho_{\frac{n}{2}}-1)(k+l+\rho_{\frac{n}{2}}-2)\left(\frac{\vd}{\vd s}\right)^{k+l+\rho_{\frac{n}{2}}}\nonumber\\
  &&+2is(k+l+\rho_{\frac{n}{2}}-1)\left(\frac{\vd}{\vd s}\right)^{k+l+\rho_{\frac{n}{2}}+1}-s^2\left(\frac{\vd}{\vd s}\right)^{k+l+\rho_{\frac{n}{2}}+2}];\label{ghde8}
\end{eqnarray}
\begin{eqnarray}
  \rho_{\frac{n}{2}}\mathscr{F}[z^{2}D] &=&  \rho_{\frac{n}{2}}a^{\tau_{\frac{n}{2}}-1}(-1)^{\sigma_{\frac{n}{2}}+\tau_{\frac{n}{2}}-2}\sum_{k=0}^{\sigma_{\frac{n}{2}}-1}\sum_{l=0}^{\tau_{\frac{n}{2}}-1}h_{kl}a^{-l} \int_{\Omega}z^{2}D(z^{k+l+\rho_{\frac{n}{2}}-1}\chi_{-s}(z))\vd z\nonumber\\
   &=& \rho_{\frac{n}{2}}a^{\tau_{\frac{n}{2}}-1}(-1)^{\sigma_{\frac{n}{2}}+\tau_{\frac{n}{2}}-2}\nonumber\\
   &&\times\sum_{k=0}^{\sigma_{\frac{n}{2}}-1}\sum_{l=0}^{\tau_{\frac{n}{2}}-1}h_{kl} a^{-l} \int_{\Omega}[(k+l+\rho_{\frac{n}{2}}-1)z^{k+l+\rho_{\frac{n}{2}}}+isz^{k+l+\rho_{\frac{n}{2}}+1}]\chi_{-s}(z)\vd z\nonumber\\
   &=& \rho_{\frac{n}{2}}a^{\tau_{\frac{n}{2}}-1}(-1)^{\sigma_{\frac{n}{2}}+\tau_{\frac{n}{2}}-2}\nonumber\\
   &&\times\sum_{k=0}^{\sigma_{\frac{n}{2}}-1}\sum_{l=0}^{\tau_{\frac{n}{2}}-1}h_{kl}a^{-l} [(k+l+\rho_{\frac{n}{2}}-1)\left(\frac{\vd}{\vd s}\right)^{k+l+\rho_{\frac{n}{2}}}+is\left(\frac{\vd}{\vd s}\right)^{k+l+\rho_{\frac{n}{2}}+1}];\nonumber\\
   \label{ghde9}
\end{eqnarray}
\begin{eqnarray}
  \sigma_{\frac{n}{2}}\mathscr{F}[zD] &=&  \sigma_{\frac{n}{2}}a^{\tau_{\frac{n}{2}}-1}(-1)^{\sigma_{\frac{n}{2}}+\tau_{\frac{n}{2}}-2}\sum_{k=0}^{\sigma_{\frac{n}{2}}-1}\sum_{l=0}^{\tau_{\frac{n}{2}}-1}h_{kl} a^{-l} \int_{\Omega}zD(z^{k+l+\rho_{\frac{n}{2}}-1}\chi_{-s}(z))\vd z\nonumber\\
   &=& \sigma_{\frac{n}{2}}a^{\tau_{\frac{n}{2}}-1}(-1)^{\sigma_{\frac{n}{2}}+\tau_{\frac{n}{2}}-2}\nonumber\\
   &&\times\sum_{k=0}^{\sigma_{\frac{n}{2}}-1}\sum_{l=0}^{\tau_{\frac{n}{2}}-1}h_{kl} a^{-l} \int_{\Omega}[(k+l+\rho_{\frac{n}{2}}-1)z^{k+l+\rho_{\frac{n}{2}}-1}+isz^{k+l+\rho_{\frac{n}{2}}}]\chi_{-s}(z)\vd z\nonumber\\
   &=& \sigma_{\frac{n}{2}}a^{\tau_{\frac{n}{2}}-1}(-1)^{\sigma_{\frac{n}{2}}+\tau_{\frac{n}{2}}-2}\nonumber\\
   &&\times\sum_{k=0}^{\sigma_{\frac{n}{2}}-1}\sum_{l=0}^{\tau_{\frac{n}{2}}-1}h_{kl} a^{-l} [(k+l+\rho_{\frac{n}{2}}-1)\left(\frac{\vd}{\vd s}\right)^{k+l+\rho_{\frac{n}{2}}-1}+is\left(\frac{\vd}{\vd s}\right)^{k+l+\rho_{\frac{n}{2}}}];\nonumber\\ \label{ghde10}
\end{eqnarray}
\begin{eqnarray}
  \tau_{\frac{n}{2}}\mathscr{F}[zD] &=&  \tau_{\frac{n}{2}}a^{\tau_{\frac{n}{2}}-1}(-1)^{\sigma_{\frac{n}{2}}+\tau_{\frac{n}{2}}-2}\sum_{k=0}^{\sigma_{\frac{n}{2}}-1}\sum_{l=0}^{\tau_{\frac{n}{2}}-1}h_{kl} a^{-l} \int_{\Omega}D(z^{k+l+\rho_{\frac{n}{2}}-1}\chi_{-s}(z))\vd z\nonumber\\
   &=& \tau_{\frac{n}{2}}a^{\tau_{\frac{n}{2}}-1}(-1)^{\sigma_{\frac{n}{2}}+\tau_{\frac{n}{2}}-2}\nonumber\\
   &&\times\sum_{k=0}^{\sigma_{\frac{n}{2}}-1}\sum_{l=0}^{\tau_{\frac{n}{2}}-1}h_{kl}a^{-l} \int_{\Omega}[(k+l+\rho_{\frac{n}{2}}-1)z^{k+l+\rho_{\frac{n}{2}}-2}+isz^{k+l+\rho_{\frac{n}{2}}-1}]\chi_{-s}(z)\vd z\nonumber\\
   &=& \tau_{\frac{n}{2}}a^{\tau_{\frac{n}{2}}-1}(-1)^{\sigma_{\frac{n}{2}}+\tau_{\frac{n}{2}}-2}\nonumber\\
   &&\times\sum_{k=0}^{\sigma_{\frac{n}{2}}-1}\sum_{l=0}^{\tau_{\frac{n}{2}}-1}h_{kl} a^{-l} [(k+l+\rho_{\frac{n}{2}}-1)\left(\frac{\vd}{\vd s}\right)^{k+l+\rho_{\frac{n}{2}}-2}+is\left(\frac{\vd}{\vd s}\right)^{k+l+\rho_{\frac{n}{2}}-1}];\nonumber\\\label{ghde11}
  \end{eqnarray}
  \begin{eqnarray}
  \frac{n(n-1)}{2}\mathscr{F}[z] &=& \frac{n(n-1)}{2} a^{\tau_{\frac{n}{2}}-1}(-1)^{\sigma_{\frac{n}{2}}+\tau_{\frac{n}{2}}-2}\sum_{k=0}^{\sigma_{\frac{n}{2}}-1}\sum_{l=0}^{\tau_{\frac{n}{2}}-1}h_{kl}a^{-l} \int_{\Omega}z^{k+l+\rho_{\frac{n}{2}}-1}\chi_{-s}(z)\vd z\nonumber\\
   &=& \frac{n(n-1)}{2}a^{\tau_{\frac{n}{2}}-1}(-1)^{\sigma_{\frac{n}{2}}+\tau_{\frac{n}{2}}-2}
   \sum_{k=0}^{\sigma_{\frac{n}{2}}-1}\sum_{l=0}^{\tau_{\frac{n}{2}}-1}h_{kl} a^{-l}\left(\frac{\vd}{\vd s}\right)^{k+l+\rho_{\frac{n}{2}}-1}.\label{ghde12}
\end{eqnarray}
Adding up equations~\eqref{ghde6}-\eqref{ghde12} yields
\begin{eqnarray}
  \widehat{H_{\frac{n}{2},e}} &=& a^{\tau_{\frac{n}{2}}-1}(-1)^{\sigma_{\frac{n}{2}}+\tau_{\frac{n}{2}}-2}
   \sum_{k=0}^{\sigma_{\frac{n}{2}}-1}\sum_{l=0}^{\tau_{\frac{n}{2}}-1}h_{kl}a^{-l}\nonumber\\ &&\cdot\bigg\{[(k+l+\rho_{\frac{n}{2}}-1)[(1+a)(k+l+\rho_{\frac{n}{2}}-2-2is)+\rho_{\frac{n}{2}}]+is\sigma_{\frac{n}{2}}]\left(\frac{\vd}{\vd s}\right)^{k+l+\rho_{\frac{n}{2}}} \nonumber\\
  &&+[2is(1+a)(k+l+\rho_{\frac{n}{2}}-1)+\rho_{\frac{n}{2}}is-s^2]\left(\frac{\vd}{\vd s}\right)^{k+l+\rho_{\frac{n}{2}}+1}-2s^{2}\left(\frac{\vd}{\vd s}\right)^{k+l+\rho_{\frac{n}{2}}+2}\nonumber\\
  &&+[(k+l+\rho_{\frac{n}{2}}-1)(\sigma_{\frac{n}{2}}-(1+a)(k+l+\rho_{\frac{n}{2}}-2))+is\tau_{\frac{n}{2}}+\frac{n(n-1)}{2}]\left(\frac{\vd}{\vd s}\right)^{k+l+\rho_{\frac{n}{2}}-1}\nonumber\\
  && +\tau_{\frac{n}{2}}(k+l+\rho_{\frac{n}{2}}-1)\left(\frac{\vd}{\vd s}\right)^{k+l+\rho_{\frac{n}{2}}-2}\bigg\}.\label{ghde13}
\end{eqnarray}
Let $m_{kl}=k+l+\rho_{\frac{n}{2}}$ and $c_{n}=a^{\tau_{\frac{n}{2}}-1}(-1)^{\sigma_{\frac{n}{2}}+\tau_{\frac{n}{2}}-2}$ then the expression in \eqref{ghde13} becomes
\begin{eqnarray*}
  \widehat{H_{\frac{n}{2},e}} &=& c_{n}
   \sum_{k=0}^{\sigma_{\frac{n}{2}}-1}\sum_{l=0}^{\tau_{\frac{n}{2}}-1}h_{kl}a^{-l}\left\{[(m_{kl}-1)[(1+a)(m_{kl}-2-2is)+\rho_{\frac{n}{2}}]+is\sigma_{\frac{n}{2}}]\left(\frac{\vd}{\vd s}\right)^{m_{kl}} \right.\nonumber\\
  &&+[2is(1+a)(m_{kl}-1)+\rho_{\frac{n}{2}}is-s^2]\left(\frac{\vd}{\vd s}\right)^{m_{kl}+1}-2s^{2}\left(\frac{\vd}{\vd s}\right)^{m_{kl}+2}\nonumber\\
  &&+[(m_{kl}-1)(\sigma_{\frac{n}{2}}-(1+a)(m_{kl}-2))+is\tau_{\frac{n}{2}}+\frac{n(n-1)}{2}]\left(\frac{\vd}{\vd s}\right)^{m_{kl}-1}\nonumber\\
  &&\left. +\tau_{\frac{n}{2}}(m_{kl}-1)\left(\frac{\vd}{\vd s}\right)^{m_{kl}-2}\right\}
\end{eqnarray*}
which further becomes
\begin{eqnarray*}
 \widehat{H_{\frac{n}{2},e}} &=& c_{n}
   \sum_{m_{kl}=0}^{\sigma_{\frac{n}{2}}-\rho_{\frac{n}{2}}-l-1}\;\sum_{l=0}^{\tau_{\frac{n}{2}}-1}h_{kl}a^{-l}\left\{[(m_{kl}-1)[(1+a)(m_{kl}-2-2is)+\rho_{\frac{n}{2}}]+is\sigma_{\frac{n}{2}}]\left(\frac{\vd}{\vd s}\right)^{m_{kl}} \right.\\
  &&+[2is(1+a)(m_{kl}-1)+\rho_{\frac{n}{2}}is-s^2]\left(\frac{\vd}{\vd s}\right)^{m_{kl}+1}-2s^{2}\left(\frac{\vd}{\vd s}\right)^{m_{kl}+2}\\
  &&+[(m_{kl}-1)(\sigma_{\frac{n}{2}}-(1+a)(m_{kl}-2))+is\tau_{\frac{n}{2}}+\frac{n(n-1)}{2}]\left(\frac{\vd}{\vd s}\right)^{m_{kl}-1}\\
  &&\left. +\tau_{\frac{n}{2}}(m_{kl}-1)\left(\frac{\vd}{\vd s}\right)^{m_{kl}-2}\right\}.
\end{eqnarray*}
By index shifting
\begin{eqnarray}\label{ghde14}
\widehat{H_{\frac{n}{2},e}}&=& c_{n}
   \sum_{m_{kl}=2}^{\sigma_{\frac{n}{2}}-\rho_{\frac{n}{2}}-l+1}\;\sum_{l=0}^{\tau_{\frac{n}{2}}-1}h_{kl}a^{-l}\left\{\left[(m_{kl}-1)[(1+a)(m_{kl}-2-2is)+\rho_{\frac{n}{2}}]+is\sigma_{\frac{n}{2}}\right.\right.\nonumber\\
   &&+\tau_{\frac{n}{2}}(m_{kl}+1)]\left(\frac{\vd}{\vd s}\right)^{m_{kl}}
  +[\left(2(1+a)(m_{kl}-1)+(\rho_{\frac{n}{2}}+\tau_{\frac{n}{2}})\right)is-s^2\nonumber\\
 && \left.+(m_{kl}+1)(\sigma_{\frac{n}{2}}-(1+a)m_{kl})+\frac{n(n-1)}{2}]\left(\frac{\vd}{\vd s}\right)^{m_{kl}+1}-2s^{2}\left(\frac{\vd}{\vd s}\right)^{m_{kl}+2}\right\}\nonumber\\
\end{eqnarray}
and by \eqref{ghde5}
\begin{equation}\label{ghde15}
  \widehat{\delta(z-w)}=c_{n}\sum_{m_{kl}=1}^{\sigma_{\frac{n}{2}}-\rho_{\frac{n}{2}}-l}\;\sum_{l=0}^{\tau_{\frac{n}{2}}-1}h_{kl}a^{-l}w^{m_{kl}-1}\chi_{-s}(w).
\end{equation}
Let
\begin{eqnarray}\label{ghde16}
  \Delta(m_{kl},l,\frac{\vd}{\vd s})&\equiv&\sum_{m_{kl}=2}^{\sigma_{\frac{n}{2}}-\rho_{\frac{n}{2}}-l+1}\;\sum_{l=0}^{\tau_{\frac{n}{2}}-1}h_{kl}a^{-l}\bigg\{[(m_{kl}-1)[(1+a)(m_{kl}-2-2is)+\rho_{\frac{n}{2}}]+is\sigma_{\frac{n}{2}}\nonumber\\
  && +\tau_{\frac{n}{2}}(m_{kl}+1)]
  +[\left(2(1+a)(m_{kl}-1)+(\rho_{\frac{n}{2}}+\tau_{\frac{n}{2}})\right)is-s^2\nonumber\\
  &&+(m_{kl}+1)(\sigma_{\frac{n}{2}}-(1+a)m_{kl})+\frac{n(n-1)}{2}]\frac{\vd}{\vd s}-2s^{2}\left(\frac{\vd}{\vd s}\right)^{2}\bigg\}.
\end{eqnarray}
Then
$$\widehat{H_{\frac{n}{2},e}}=c_{n}\Delta(m_{kl},l,\frac{\vd}{\vd s})\left(\frac{\vd}{\vd s}\right)^{m_{kl}}.$$
Now, the  Fourier of Green function is given by
\begin{eqnarray}\label{gde17}
 \widehat{ G}&=&\Delta(m_{kl},l,\frac{\vd}{\vd s})^{-1}\left(\frac{\vd}{\vd s}\right)^{-m_{kl}}\sum_{m_{kl}=1}^{\sigma_{\frac{n}{2}}-\rho_{\frac{n}{2}}-l}\;\sum_{l=0}^{\tau_{\frac{n}{2}}-1}h_{kl}a^{-l}w^{m_{kl}-1}\chi_{-s}(w)\nonumber\\
  &=&\Delta(m_{kl},l,\frac{\vd}{\vd s})^{-1}\sum_{m_{kl}=1}^{\sigma_{\frac{n}{2}}-\rho_{\frac{n}{2}}-l}\;\sum_{l=0}^{\tau_{\frac{n}{2}}-1}h_{kl}a^{-l}w^{m_{kl}-1}\underbrace{\int\cdots\int}_{m_{kl}}\chi_{-s}(w)\underbrace{\vd s\cdots\vd s}_{m_{kl}}\nonumber\\
  &=&\Delta(m_{kl},l,\frac{\vd}{\vd s})^{-1}\sum_{m_{kl}=1}^{\sigma_{\frac{n}{2}}-\rho_{\frac{n}{2}}-l}\;\sum_{l=0}^{\tau_{\frac{n}{2}}-1}h_{kl}a^{-l}\underbrace{\int\cdots\int}_{m_{kl}-1} \delta(w)\underbrace{\vd s\cdots\vd s}_{m_{kl}-1} \nonumber\\
  &=&\Delta(m_{kl},l,\frac{\vd}{\vd s})^{-1}\sum_{m_{kl}=1}^{\sigma_{\frac{n}{2}}-\rho_{\frac{n}{2}}-l}\;\sum_{l=0}^{\tau_{\frac{n}{2}}-1}h_{kl}a^{-l}w^{m_{kl}-1}\frac{s^{m_{kl}-1}}{(m_{kl}-1)!}.\nonumber\\
\end{eqnarray}
Let the operator $\Delta(m_{kl},l,\frac{\vd}{\vd s})$ be written in a factorable form
$$\Delta(m_{kl},l,\frac{\vd}{\vd s})=\sum \epsilon_{0}+\epsilon_{1}\frac{\vd}{\vd s}+\epsilon_{2}\left(\frac{\vd}{\vd s}\right)^{2}\equiv\sum \left(\eta_{+}+\frac{\vd}{\vd s}\right)\left(\eta_{-}+\frac{\vd}{\vd s}\right)$$
where $\displaystyle\sum\equiv\sum_{m_{kl}=1}^{\sigma_{\frac{n}{2}}-\rho_{\frac{n}{2}}-l}\;\sum_{l=0}^{\tau_{\frac{n}{2}}-1}h_{kl}a^{-l}$ and
\begin{eqnarray*}
  \epsilon_{0} &=&  (m_{kl}-1)[(1+a)(m_{kl}-2-2is)+\rho_{\frac{n}{2}}]+is\sigma_{\frac{n}{2}}+\tau_{\frac{n}{2}}(m_{kl}+1)\\
  \epsilon_{1} &=&  \left(2(1+a)(m_{kl}-1)+(\rho_{\frac{n}{2}}+\tau_{\frac{n}{2}})\right)is-s^2+(m_{kl}+1)(\sigma_{\frac{n}{2}}-(1+a)m_{kl})+\frac{n}{2}(n-1)\\
  \epsilon_{2} &=& -2s^{2}
\end{eqnarray*}
with
\begin{equation*}
  \eta_{\pm}=\frac{-\epsilon_{1}\pm\sqrt{\epsilon_{1}^{2}-4\epsilon_{0}\epsilon_{2}}}{2\epsilon_{0}}.
\end{equation*}
Let
$$\phi(s,w)=\sum_{m_{kl}=1}^{\sigma_{\frac{n}{2}}-\rho_{\frac{n}{2}}-l}\;\sum_{l=0}^{\tau_{\frac{n}{2}}-1}h_{kl}a^{-l}w^{m_{kl}-1}\frac{s^{m_{kl}-1}}{(m_{kl}-1)!}.$$
By (\cite{AF}, Chapter 16, short methods (c), p.99), given function of operators
$$F(\frac{\vd}{\vd s})=\prod_{p=0}^{n}(\eta_{p}+\frac{\vd}{\vd s})\equiv\sum_{k=0}^{n}a_{k}\left(\frac{\vd}{\vd s}\right)^{k}$$ with constant coefficients $a_k$ then
\begin{equation}\label{gde16b}
  \frac{1}{F(\frac{\vd}{\vd s})}s^{m}=\left(\sum_{k=0}^{m}a_{k}\left(\frac{\vd}{\vd s}\right)^{k}\right)  s^{m}.
\end{equation}
Then by equations~\eqref{gde16b} and \eqref{gde17} one gets
\begin{equation*}
  \Delta(m_{kl},l,\frac{\vd}{\vd s})^{-1}\phi(w,s)=\sum_{m_{kl}=1}^{\sigma_{\frac{n}{2}}-\rho_{\frac{n}{2}}-l}\;\sum_{l=0}^{\tau_{\frac{n}{2}}-1}h_{kl}a^{-l}\frac{w^{m_{kl}-1}}{(m_{kl}-1)!}\left(\sum \epsilon_{0}+\epsilon_{1}\left(\frac{\vd}{\vd s}\right)+\epsilon_{2}\left(\frac{\vd}{\vd s}\right)^{2}\right)s^{m_{kl}-1}.
\end{equation*}
Thus, $\widehat{G}$ associated with the Heun differential operator is
$$\widehat{G}=\sum\frac{w^{m_{kl}-1}}{(m_{kl}-1)!}\left(\sum \epsilon_{0}s^{m_{kl}-1}+\epsilon_{1}(m_{kl}-1)s^{m_{kl}-2}+\epsilon_{2}(m_{kl}-1)(m_{kl}-2)s^{m_{kl}-3}\right).$$
$s=\sigma+i\tau$ and $-H_0=D^2$. It can be recalled that given a polynomial $P:\mathbb{C}\rightarrow\mathbb{C}, P(\frac{\vd}{\vd z})\delta(z)=\mathscr{F}^{-1}[P(-is)].$ Thus, one obtains the Green function $G(z,w)=\reallywidecheck{\widehat{G}\;}$ by
\begin{multline}\label{ddff}
  G(z,w)
  =\sum i^{m_{kl}-1}\frac{w^{m_{kl}-1}}{(m_{kl}-1)!}\\
  \times\left(\sum\reallywidecheck{\epsilon_{0}(-is)^{m_{kl}-1}}-i(m_{kl}-1)\epsilon_{1}\reallywidecheck{(-is)^{m_{kl}-2}}-(m_{kl}-1)(m_{kl}-2)\epsilon_{2}\reallywidecheck{(-is)^{m_{kl}-3}}\right).
\end{multline}
For convenience, let $m=m_{kl}, c_{m}=\frac{i^{m_{kl}}}{m_{kl}!}, d_{m}=-im_{kl}, e_{m}=-m_{kl}(m_{kl}-1)$,
$\displaystyle p=\sigma_{\frac{n}{2}}-\rho_{\frac{n}{2}}-l, \sum\equiv\sum_{m=1}^{p}.$
By using the property of Dirac delta function derivatives (See~\cite{KRP}, \S 9.11 (4):~243)
 \begin{equation*}
   w^{n}\delta^{(m)}(w)=\left\{\begin{array}{cc}
                                 0 & m<n \\
                                 {} & {}\\
                  \displaystyle    (-1)^{n}\frac{m!}{(m-n)!}\delta^{(m-n)}(w)& m\geq n.
                               \end{array}
   \right.
 \end{equation*}
one evaluates the inverse Fourier transforms
\begin{eqnarray}
  \reallywidecheck{(-is)^{m-1}} &=& \int_{\Omega}(-is)^{m-1}e^{s\cdot z}\sum_{k=0}^{\sigma_{\frac{n}{2}}-1}\sum_{l=0}^{\tau_{\frac{n}{2}}-1}h_{kl}a^{-l}z^{m-1}\vd s \nonumber\\
   &=& \sum_{k=0}^{\sigma_{\frac{n}{2}}-1}\sum_{l=0}^{\tau_{\frac{n}{2}}-1}h_{kl}a^{-l}z^{m-1}\int_{\Omega}(-is)^{m-1}e^{s\cdot z}\vd s \nonumber\\
  &=& \sum_{k=0}^{\sigma_{\frac{n}{2}}-1}\sum_{l=0}^{\tau_{\frac{n}{2}}-1}h_{kl}a^{-l}z^{m-1}\delta^{(m-1)}(z)\nonumber\\
  &=& \sum_{k=0}^{\sigma_{\frac{n}{2}}-1}\sum_{l=0}^{\tau_{\frac{n}{2}}-1}h_{kl}a^{-l}(-1)^{m-1}(m-1)!\delta(z) \label{dff9a}\\
  \reallywidecheck{(-is)^{m-2}} &=& \sum_{k=0}^{\sigma_{\frac{n}{2}}-1}\sum_{l=0}^{\tau_{\frac{n}{2}}-1}h_{kl}a^{-l}z^{m-1}\delta^{(m-2)}(z)=0 \label{dff9b} \\
  \reallywidecheck{(-is)^{m-3}} &=& \sum_{k=0}^{\sigma_{\frac{n}{2}}-1}\sum_{l=0}^{\tau_{\frac{n}{2}}-1}h_{kl}a^{-l}z^{m-1}\delta^{(m-3)}(z)=0.\nonumber\\
      \label{dff9c}
\end{eqnarray}
Re-substituting equations~\eqref{dff9a}-\eqref{dff9c} into equation~\eqref{ddff}, one gets
 \begin{equation}\label{ddff1}
  G(z,w)=\sum_{m=1}^{p} \frac{(i w)^{m-1}}{(m-1)!}\sum_{m=1}^{p}\sum_{k=0}^{\sigma_{\frac{n}{2}}-1}\sum_{l=0}^{\tau_{\frac{n}{2}}-1}h_{kl}a^{-l}(-1)^{m-1}
 \epsilon_{0} (m-1)!\delta(z).
\end{equation}
In terms of the eigenvalue $E_{\frac{n}{2},e},$ the Green function $G(E_{\frac{n}{2},e},z,w)$ of $H_{\frac{n}{2},e}$ with $Dom(H_{\frac{n}{2},e})\in\mathscr{C}_{c}^{\infty}(\Omega)$ is defined by the integral $$(H_{\frac{n}{2},e}-E_{\frac{n}{2},e})^{-1}f(z)=\int_{\Omega}G(E_{\frac{n}{2},e},z,w)f(w)\vd w, \;\;\;\Im E_{\frac{n}{2},e}\neq 0.$$
In what follows, it is of particular interest, to obtain the Green function at the point $z=w\in\Omega^{\pm}$ (the upper and lower complex half-plane excluding the singular points $0,1,a,\infty$). By setting $z=w$, one obtains
\begin{eqnarray*}
  G(E_{\frac{n}{2},e}, w)=G(E_{\frac{n}{2},e}, w,w)   &=&\left\{\begin{array}{cc}
                      G^{+}(E_{\frac{n}{2},e}, w,w)&, w\in\Omega^{+} \\
                       &\\
                      G^{-}(E_{\frac{n}{2},e}, w,w)&, w\in \Omega^{-}
                     \end{array}
  \right.
\end{eqnarray*}
where, $G^{\pm}(E_{\frac{n}{2},e}, w,w)=G^{\pm}(E_{\frac{n}{2},e},w)$ can now be written in the form
\begin{equation}\label{grsum}
  G^{\pm}(E_{\frac{n}{2},e},w)= \sum_{m=1}^{p} \frac{(iw)^{m-1}}{(m-1)!}\sum_{m=1}^{p}\sum_{k=0}^{\sigma_{\frac{n}{2}}-1}\sum_{l=0}^{\tau_{\frac{n}{2}}-1}h_{kl}a^{-l}\frac{(-1)^{m-1}}{E_{\frac{n}{2},e}}
  \epsilon_{0} (m-1)!\delta(\pm w).
\end{equation}
It is obvious that $\displaystyle\sum_{m=1}^{p} \frac{(iw)^{m-1}}{(m-1)!}$  is a $(p-1)$th approximation of $e^{iw}$. By setting $\displaystyle\phi(w)=\sum_{m=1}^{p}(iw)^{m-1}$ and following ~(\cite{JAB}, Definition~3, Eq.(9), p.3), then equation~\eqref{grsum}  may be written as
\begin{equation}\label{grsum1}
 G^{\pm}(E_{\frac{n}{2},e},w)= \sum_{m=1}^{p}\sum_{k=0}^{\sigma_{\frac{n}{2}}-1}\sum_{l=0}^{\tau_{\frac{n}{2}}-1}h_{kl}a^{-l}\frac{(-1)^{m-1}}{E_{\frac{n}{2},e}}
   \epsilon_{0} \phi(w)\delta(\pm w).
\end{equation}
It should be remarked here that $\delta(w)$ is an even function for $\ell\in\mathbb{Z}_{0}^{+}$, thus
$$\delta^{(m)}(\pm w)=\left\{\begin{array}{cc}
                           +\delta^{(m)}(w),   & m=2\ell, \\
                              &  \\
                             - \delta^{(m)}(w), & m=2\ell+1 ,
                           \end{array}\right.$$
and
 \begin{equation}\label{grsu}
   \phi(w)\delta^{(m)}(w)=(-1)^{m}\sum_{r=0}^{m}\binom{m}{r}(-1)^{r}\phi^{(m-r)}(0)\delta^{(r)}(w),\;\;\binom{m}{r}=\frac{m!}{r!(m-r)!}.
 \end{equation}
 From these data, the expression ~\eqref{grsum1} for Green function becomes
\begin{eqnarray}\label{grsum2}
  G^{\pm}(E_{\frac{n}{2},e},w)&=& \sum_{m=1}^{p}\sum_{k=0}^{\sigma_{\frac{n}{2}}-1}\sum_{l=0}^{\tau_{\frac{n}{2}}-1}h_{kl}a^{-l}\frac{(-1)^{m-1}}{E_{\frac{n}{2},e}}\epsilon_{0} \phi(w)\delta(w)\nonumber\\
&=&\sum_{m=1}^{p}\sum_{k=0}^{\sigma_{\frac{n}{2}}-1}\sum_{l=0}^{\tau_{\frac{n}{2}}-1}h_{kl}a^{-l}\frac{(-1)^{m-1}}{E_{\frac{n}{2},e}}
  \epsilon_{0} \phi(0)\delta(w),
\end{eqnarray}
where $\phi(0)=1$ so that  equation~\eqref{grsum2} becomes
\begin{equation}\label{grsum4}
  G^{\pm}(E_{\frac{n}{2},e},w)= \sum_{m=1}^{p}\sum_{k=0}^{\sigma_{\frac{n}{2}}-1}\sum_{l=0}^{\tau_{\frac{n}{2}}-1}h_{kl}a^{-l}\frac{(-1)^{m-1}}{E_{\frac{n}{2},e}}
  \epsilon_{0} \delta(w).
\end{equation}
It should be remarked here that $$G^{\pm}(\cdot,w)\in\mathscr{C}_{c}^{\infty}(\Omega)\subset L^{2}(\Omega^{+}, \vd_{\omega}\mu(w))\oplus L^{2}(\Omega^{-}, \vd_{\omega}\mu(w)).$$ By setting the eigenvalue $E_{\frac{n}{2}}=\lambda+i\epsilon$ were $\lambda=\Re E_{\frac{n}{2}}$ and $\epsilon=\Im E_{\frac{n}{2}}$ one defines the SSF using the Privalov's representation (cf:~\cite{GES}, \S (4.8):~760) as
\begin{equation}\label{ghde17}
  \xi^{\pm}(\lambda,H_{\frac{n}{2},e},H_{0})=\frac{1}{\pi}\lim_{\epsilon\downarrow 0}\textrm{Arg}\; G^{\pm}(\lambda+i\epsilon,w).
\end{equation}
Further simplification leads to
\begin{eqnarray}
   \xi^{\pm}(\lambda,H_{\frac{n}{2},e},H_{0}) &=& \frac{1}{\pi}G^{\pm}(w,w)\lim_{\epsilon\downarrow 0}\textrm{Arg}\frac{1}{\lambda+i\epsilon} \nonumber\\
   &=& \frac{1}{\pi}G^{\pm}(w,w)\lim_{\epsilon\downarrow 0}\textrm{Arg}\frac{\lambda-i\epsilon}{\lambda^{2}+\epsilon^{2}} \nonumber\\
   &=& \frac{1}{\pi}G^{\pm}(w,w)\lim_{\epsilon\downarrow 0}\tan^{-1}\left(-\frac{\epsilon}{\lambda}\right),\label{ghde18}
\end{eqnarray}
where $\lim_{\epsilon\downarrow 0}\tan^{-1}\left(-\frac{\epsilon}{\lambda}\right)=\pi \mathbb{H}(\lambda)$ is the Heisenberg distribution (see~\cite{KRP}, \S2.7, Example 4, p.45) and $G^{\pm}(w,w)\equiv G^{\pm}(w)$ is given by
$$G^{\pm}(w)= \sum_{m=1}^{p}\sum_{k=0}^{\sigma_{\frac{n}{2}}-1}\sum_{l=0}^{\tau_{\frac{n}{2}}-1}h_{kl}a^{-l}(-1)^{m-1}
  \epsilon_{0} \delta(w).$$
  Therefore, one gives the spectral shift function in times of Heisenberg distribution as
  $$\xi^{\pm}(\lambda,H_{\frac{n}{2},e},H_{0})=G^{\pm}(w)\mathbb{H}(\lambda).$$
This completes the proof.
\end{proof}
\begin{defn} An operator $T\in B(\mathscr{H})$ is called Hilbert-Schmidt if and only if $tr (T^{\ast}T)<\infty$. The family of Hilbert-Schmidt operators is denoted by $B_{2}(\mathscr{H}).$
\end{defn}
Next, it is noteworthy to consider the Hilbert-Schmidt property of $G^{\pm}(w)$ in $L^{2}(\Omega\times\Omega,\vd \mu_{\omega}(w)\otimes\vd \mu_\omega(w)).$
 $G^{\pm}(w)$ is defined in $L^{2}(\Omega\times\Omega,\vd \mu_{\omega}(w)\otimes\vd \mu_\omega(w)).$
\begin{proof}
Let $c_{n}=a^{\tau_{\frac{n}{2}}-1}(-1)^{\sigma_{\frac{n}{2}}+\tau_{\frac{n}{2}}-2}$ and $\mathscr{K}_{p}$ be given by
\begin{equation*}
  \mathscr{K}_{p} = \sum_{m=1}^{p}\sum_{m=0}^{\sigma_{\frac{n}{2}}-1}\sum_{n=0}^{\tau_{\frac{n}{2}}-1}h_{kl} a^{-l}(-1)^{m-1}\epsilon_{0}.
\end{equation*}
Then
\begin{eqnarray}
  \|G^{\pm}\|_{2}^{2} &=& \int_{\Omega}\int_{\Omega}G^{\pm}(\cdot,w,z)\overline{G^{\pm}(\cdot,w,z)}\vd\mu_{\omega}(w)\vd\mu_{\omega}(w)\nonumber \\
  &=&\int_{\Omega}\int_{\Omega} \mathscr{K}_{p}^{2}\left(\sum_{m=0}^{\sigma_\frac{n}{2}-1}\sum_{n=0}^{\tau_\frac{n}{2}-1}c_{n}h_{mn}w^{m+n+\rho_\frac{n}{2}-1}\right)^2\delta^2(w)\vd w\vd w\nonumber\\
  &=&\left(\int_{\Omega}\mathscr{K}_{p}\sum_{m=0}^{\sigma_\frac{n}{2}-1}\sum_{n=0}^{\tau_\frac{n}{2}-1}c_{n}h_{mn}w^{m+n+\rho_\frac{n}{2}-1}\delta(w)\vd w\right)^{2} .\nonumber\\
  &=&\left(\int_{\Omega}\mathscr{K}_{p}\sum_{m=0}^{\sigma_\frac{n}{2}-1}\sum_{n=0}^{\tau_\frac{n}{2}-1}c_{n}h_{mn}w^{m+\beta}\delta(w)\vd w\right)^{2} .\nonumber\\
  \label{Hsc}
\end{eqnarray}
Let $\displaystyle\omega(w)=\sum_{m=0}^{\sigma_\frac{n}{2}-1}\sum_{n=0}^{\tau_\frac{n}{2}-1}c_{n}h_{mn}w^{m+n+\rho_\frac{n}{2}-1}$ then equation~\eqref{Hsc} becomes
\begin{equation*}
  \|G^{\pm}\|_{2}^{2}=\mathscr{K}_{p}^{2}\omega^{2}(0)<\infty.
 \end{equation*}
  Hence, the result follows.
 \end{proof}
 In what follows, using equation~\eqref{ddff1} as $p\rightarrow\infty, \mathscr{K}_{p}\rightarrow\mathscr{K}_{\infty}, \phi(w)\rightarrow \phi_{\infty}, G_{\infty}^{\pm}(\cdot,w,z)$ takes the form
 \begin{equation*}
   G_{\infty}^{\pm}(\cdot,w,z)=\mathscr{K}_{\infty}\phi_{\infty}(w)\delta(z).
 \end{equation*}
 \begin{teo}\label{GSI} Let  $\mathscr{H}=L^2(\Omega,\vd \mu_{\omega})$  and
  $$T_{G_{\infty}}^{\pm}\Psi(z):=\int_{\Omega}G_{\infty}^{\pm}(\cdot,w,z)\Psi(w)\vd \mu_{\omega}(w)$$
  then $T_{G_{\infty}}^{\pm}\in B(\mathscr{H})$ is Hilbert-Schmidt if and only if there is a function $G_{\infty}^{\pm}\in L^{2}(\Omega\times\Omega,\vd \mu_{\omega}(w)\otimes\vd \mu_\omega(w)).$ Moreover,
  $$\|T_{G_{\infty}}^{\pm}\|_{2}^{2}=\int_{\Omega}|G_{\infty}^{\pm}(\cdot,w,z)|^{2}\vd \mu_{\omega}(w)\vd \mu_{\omega}(w).$$
  \end{teo}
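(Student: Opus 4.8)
The plan is to prove this as the classical characterization of Hilbert--Schmidt integral operators via Parseval's identity applied to a product orthonormal basis. First I would fix an orthonormal basis $\{e_n\}_{n\in\mathbb{N}}$ of $\mathscr{H}=L^2(\Omega,\vd\mu_\omega)$. The separability of $L^2(\Omega,\vd\mu_\omega)$ guarantees such a basis, and the family $\{e_m\otimes\overline{e_n}\}_{m,n}$, where $(e_m\otimes\overline{e_n})(z,w)=e_m(z)\overline{e_n(w)}$, is then an orthonormal basis of the product space $L^2(\Omega\times\Omega,\vd\mu_\omega(w)\otimes\vd\mu_\omega(w))$.

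Next I would compute the matrix elements of $T_{G_\infty}^{\pm}$ against this basis. By the integral representation defining $T_{G_\infty}^{\pm}$ together with Fubini's theorem,
$$\langle T_{G_\infty}^{\pm}e_n,e_m\rangle = \int_\Omega\int_\Omega G_\infty^{\pm}(\cdot,w,z)\,e_n(w)\,\overline{e_m(z)}\,\vd\mu_\omega(w)\vd\mu_\omega(z),$$
which is, up to conjugation, precisely the Fourier coefficient of $G_\infty^{\pm}$ relative to the product basis element $e_m\otimes\overline{e_n}$. Invoking the definition of the Hilbert--Schmidt norm through the defining relation $\|T\|_2^2=\mathrm{tr}(T^{\ast}T)$, one has
$$\|T_{G_\infty}^{\pm}\|_2^2 = \sum_{m,n}\big|\langle T_{G_\infty}^{\pm}e_n,e_m\rangle\big|^2,$$
so this double sum equals the sum of squared moduli of all Fourier coefficients of $G_\infty^{\pm}$.

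Parseval's identity for the product basis then identifies this sum with $\|G_\infty^{\pm}\|_{L^2(\Omega\times\Omega)}^2$. Hence $\|T_{G_\infty}^{\pm}\|_2^2<\infty$, that is, $T_{G_\infty}^{\pm}\in B_2(\mathscr{H})$, if and only if $G_\infty^{\pm}\in L^2(\Omega\times\Omega,\vd\mu_\omega(w)\otimes\vd\mu_\omega(w))$; and in that case the equality of the two quantities yields the asserted norm formula. Both directions of the equivalence and the final identity thus emerge simultaneously from a single chain of equalities, with no separate argument required for each implication.

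The step I expect to require the most care is the application of Fubini's theorem and the interchange of summation with integration needed to pass from the matrix-element expansion to Parseval's identity; this is legitimate once $G_\infty^{\pm}\in L^2(\Omega\times\Omega)$, but securing the \emph{only if} direction demands a standard monotone-convergence argument on the partial sums of coefficients before square-integrability of the kernel is known. A further subtlety worth flagging is that the kernel obtained earlier, $G_\infty^{\pm}(\cdot,w,z)=\mathscr{K}_\infty\phi_\infty(w)\delta(z)$, is distributional in the $z$-variable, so its membership in $L^2(\Omega\times\Omega)$ and the resulting norm must be read in the regularized sense already used in the preceding Hilbert--Schmidt computation rather than as a naive pointwise square integral.
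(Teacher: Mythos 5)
Your proof is correct and is the standard argument, but it follows a genuinely different route from the paper's. You go directly through the matrix elements $\langle T_{G_\infty}^{\pm}e_n,e_m\rangle$ of a fixed orthonormal basis, identify them with the Fourier coefficients of the kernel against the product basis $\{e_m\otimes\overline{e_n}\}$, and invoke Parseval; this delivers the exact identity $\|T_{G_\infty}^{\pm}\|_2^2=\|G_\infty^{\pm}\|_{L^2(\Omega\times\Omega)}^2$ and hence both directions of the equivalence simultaneously. The paper instead leans on the preceding finite-rank computation: it lets the rank $p\to\infty$, uses the Cauchy--Schwarz inequality to bound the operator norm by $\|G_\infty^{\pm}\|_{L^2}$, deduces that $T_{G_\infty}^{\pm}$ is a norm limit of finite-rank integral operators (hence compact), and only at the end brings in a product orthonormal basis $\{\Psi_\nu(z)\overline{\Psi_{\nu'}(w)}\}$ for a trace estimate. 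What the paper's route buys is the explicit compactness-by-approximation picture tied to the truncation parameter $p$ of the Green kernel; what your route buys is the clean two-sided equality, since the paper's chain really only establishes the inequality $\|T_{G_\infty}^{\pm}\|_2^2\leqslant\|G_\infty^{\pm}\|_{L^2}^2$ (its trace computation omits the sum over $\nu$ and the converse implication is never argued), so yours is the more complete proof of the stated if-and-only-if. Your closing caveat is also well placed: because $G_\infty^{\pm}(\cdot,w,z)=\mathscr{K}_\infty\phi_\infty(w)\delta(z)$ is distributional in $z$, its membership in $L^2(\Omega\times\Omega)$ only makes sense in the regularized reading the paper adopts when it replaces $\delta^2$ by $\delta$ in the preceding norm computation, and either proof must confront that before the abstract theorem can be applied to this particular kernel.
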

  \begin{proof}
    It has been shown by Theorem~\ref{GSI} that a finite rank operator $G^{\pm}\in L^{2}(\Omega\times\Omega, \vd\mu_{\omega}\otimes\vd\mu_{\omega})$ (say of rank $p$) and $T_{G}^{\pm}$ its associated integral operator. Now, as $p\rightarrow \infty, G^{\pm}\rightarrow G_{\infty}^{\pm}\in L^{2}(\Omega\times\Omega, \vd\mu_{\omega}\otimes\vd\mu_{\omega}), T_{G}^{\pm}\rightarrow T_{G_{\infty}}^{\pm}.$ For any two functions $\Psi_{1},\Psi_{2}\in L^{2}(\Omega,\vd\mu_{\omega}), T_{G_{\infty}}^{\pm}\Psi_{1}=T_{G_{\infty}}^{\pm}\Psi_{2}\implies  T_{G_{\infty}}^{\pm}(\Psi_{1}-\Psi_{2}) = 0\implies \Psi_{1}-\Psi_{2}=0$ as $T_{G_{\infty}}^{\pm}\neq 0.$ This means that $\Psi_{1}=\Psi_{2}$ and thus $T_{G_{\infty}}^{\pm}$ is well defined on $\mathscr{H}.$

    However,
    \begin{eqnarray*}
      \|T_{G_{\infty}}^{\pm}\Psi(z)\|_{L^1}&=&\bigg\|\int_{\Omega}G_{\infty}^{\pm}(\cdot,w,z)\Psi(w)\vd\mu_{\omega}(w)\bigg\|_{L^1} \\
      \implies   \|T_{G_{\infty}}^{\pm}\Psi(z)\|_{L^1}\leqslant\|T_{G_{\infty}}^{\pm}\|_{L^2}\|\Psi(z)\|_{L^2} &\leqslant& \|G_{\infty}^{\pm}(\cdot,w,z)\|_{L^2}\|\Psi(w)\|_{L^2}\\
      \implies \|T_{G_{\infty}}^{\pm}\|_{L^2}&\leqslant& \|G_{\infty}^{\pm}(\cdot,w,z)\|_{L^2}
    \end{eqnarray*}
    (by Cauchy-Schwartz inequality). From the last inequality, it is obvious that as $p\rightarrow\infty$
    $$\|G^{\pm}-G_{\infty}^{\pm}\|_{L^2}\rightarrow 0\implies \|T_{G}^{\pm}- T_{G_{\infty}}^{\pm}\|_{L^2}\rightarrow 0. $$
    Thus, $T_{G_{\infty}}^{\pm}$ is compact and it is however remarkable that if $\{\Psi_{\nu}(z)\overline{\Psi_{\nu'}(w)}\}_{\nu,\nu'\in\mathbb{N}}$ is orthonormal basis of $L^{2}(\Omega\times\Omega,\vd\mu_{\omega}\otimes\vd\mu_{\omega})$  then the trace operator
    \begin{eqnarray*}
      tr (T_{G_{\infty}}^{\pm\,\ast}T_{G_{\infty}}^{\pm}) &=&  \langle T_{G_{\infty}}^{\pm}\Psi_{\nu},T_{G_{\infty}}^{\pm}\Psi_{\nu}\rangle, \\
      &=&\langle T_{G_{\infty}}^{\pm\,\ast}T_{G_{\infty}}^{\pm}\Psi_{\nu},\Psi_{\nu}\rangle\\
       &=& \| T_{G_{\infty}}^{\pm}\|_{L^2}^2\|\Psi_{\nu}\|_{L^2}^2\\
       &=&\| T_{G_{\infty}}^{\pm}\|_{L^2}^2\leqslant \|G_{\infty}^{\pm}(\cdot,w,z)\|_{L^2}<\infty.\\
    \end{eqnarray*}
    when $\| T_{G_{\infty}}^{\pm}\|_{L^2}^2= \|G_{\infty}^{\pm}(\cdot,w,z)\|_{L^2}, T_{G_{\infty}}^{\pm}\in B_{2}(\mathscr{H}).$
  \end{proof}
  It is observed that the kernel $G(z,w)$ is a differentiable function. Hence, the following result.
 \begin{teo} Let $\psi\in\mathscr{C}^{\infty}(\mathbb{CP}^1)$- then the integral operator
  $$T(\psi(z))=\int_{\mathbb{CP}^1}G(z,w)\psi(z)\vd g[z].$$
defined on $L^{2}(\mathbb{CP}^1)=L^2(\mathbb{CP}^1,\vd g[z])$ is a Hilbert-Schmidt operator, hence is compact.
\end{teo}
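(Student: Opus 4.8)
The plan is to reduce the statement to the $L^2$-integrability of the kernel $G(z,w)$ over the product $\mathbb{CP}^1\times\mathbb{CP}^1$ and then invoke the Hilbert-Schmidt criterion already established. Recall from Theorem~\ref{GSI} that an integral operator $T$ on $L^2(X,\mu)$ with kernel $K$ is Hilbert-Schmidt if and only if $K\in L^2(X\times X,\mu\otimes\mu)$, in which case
$$\|T\|_2^2=\int_X\int_X|K(z,w)|^2\,\vd\mu(z)\,\vd\mu(w).$$
Thus it suffices to verify that $G\in L^2(\mathbb{CP}^1\times\mathbb{CP}^1,\vd g[z]\otimes\vd g[w])$; once this is shown, $T$ is Hilbert-Schmidt, and since every Hilbert-Schmidt operator is a norm-limit of finite-rank operators it is automatically compact.

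First I would record the two geometric facts that make the integrability almost automatic. The Riemann sphere $\mathbb{CP}^1$ is compact, and the quasi-invariant measure $\vd g[z]=\tfrac{i}{2}|cz+d|^{-4}\vd z\,\vd\overline{z}$ introduced in the Remark assigns finite total mass to it. Consequently $\vd g[z]\otimes\vd g[w]$ is a finite measure on the compact product $\mathbb{CP}^1\times\mathbb{CP}^1$.

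Next I would use the observation recorded just before the statement, namely that the Green kernel $G(z,w)$ is a differentiable function on $\mathbb{CP}^1\times\mathbb{CP}^1$. A differentiable, hence continuous, function on a compact set is bounded, say $|G(z,w)|\le M<\infty$. Combining this with the finiteness of the measure gives
$$\int_{\mathbb{CP}^1}\int_{\mathbb{CP}^1}|G(z,w)|^2\,\vd g[z]\,\vd g[w]\le M^2\left(\int_{\mathbb{CP}^1}\vd g[z]\right)^2<\infty,$$
so $G\in L^2(\mathbb{CP}^1\times\mathbb{CP}^1)$. By the criterion of the first paragraph, $T$ is Hilbert-Schmidt with $\|T\|_2<\infty$, and is therefore compact, completing the argument.

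Finally, I expect the main obstacle to lie in justifying the continuity and boundedness claim rather than in the finite-measure bookkeeping. The explicit expression for $G$ in~\eqref{ddff1} carries a factor $\delta(z)$, so strictly speaking $G$ is a distribution and the pointwise bound $|G|\le M$ must be interpreted after the kernel is smeared against the smooth test density $\psi\in\mathscr{C}^{\infty}(\mathbb{CP}^1)$; the delicate part is to show that this smearing regularizes $G$ into a genuinely bounded (indeed smooth) function on the compact product, and that the finite-rank truncations $\mathscr{K}_{p}$ of the preceding analysis converge to it in the Hilbert-Schmidt norm, so that the limiting operator inherits both square-integrability and compactness from its finite-rank approximants.
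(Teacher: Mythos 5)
The paper in fact supplies no proof of this theorem: it is stated immediately after the one-line observation that the kernel $G(z,w)$ is a differentiable function, and is followed only by remarks on the trace. Your argument --- compactness of $\mathbb{CP}^1$, finiteness of the measure $\vd g[z]$, boundedness of a continuous kernel on a compact product, and the Hilbert--Schmidt criterion of Theorem~\ref{GSI} --- is exactly the standard reconstruction of what that observation is meant to imply, so you are taking the route the paper intends.

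The genuine issue is the one you flag in your final paragraph, and it is a gap in the paper rather than one you have introduced: the explicit formula \eqref{ddff1} exhibits $G(z,w)$ as a finite sum of terms each carrying a factor $\delta(z)$, so $G$ is a distribution supported on $\{z=0\}$ and is neither differentiable nor bounded, contradicting the premise $|G(z,w)|\le M$ on which both your estimate and the paper's assertion rest. To close the gap one would have to either show that pairing against $\psi\in\mathscr{C}^{\infty}(\mathbb{CP}^1)$ regularizes the kernel into a genuinely square-integrable function (and that the finite-rank truncations converge in Hilbert--Schmidt norm), or weaken the conclusion to a statement about $T$ acting into distributions, as the paper's closing trace remark implicitly does; neither you nor the paper carries this out, so the theorem as stated remains unproved.
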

The trace of $T$ is given by
$$\mathrm{tr}\;(T)=\int_{\mathbb{CP}^1}G(z,z)\vd g[z].$$
Since $\mathbb{CP}^1$ is a compact Riemann manifold, we observe that the closed graph theorem implies that the map
$$\psi\mapsto\mathrm{tr}\;(T(\psi)):\mathscr{C}^{\infty}(\mathbb{CP}^1)\longrightarrow \mathbb{C}$$
is a distribution on $\mathbb{CP}^1.$
This result shows that the Lie algebraic HDO is a compact operator defined on a compact Lie group $SL(2,\mathbb{C})$.
\section{Conclusion}
In this work, the quasi-exact and exact solvability of the Lie algebraic equations associated with the $sl(2)$-algebra has been studied relative to the Heun operator.  The distributional solution of QES Heun equation, the Green kernel of exactly solvable Heun equation, the integral relations associated with Heun- Green kernels obtained, the compactness of the integral operator SSF and Hilbert-Schmidt condition of the integral operator associated with Heun-Green kernel has been discussed. The spectral shift functions of the exactly solvable operator has been investigated using the Fourier transform technique.

 \vspace{.5cm}
\address{Department of Mathematics, \\ Adeyemi Federal University of Education,\\ 143 Ondo-Ore Road, 351103 Ondo State, NIGERIA}

\noindent{\emph{E-mail address}:~\email{idiongus@aceondo.edu.ng, usidiong@gmail.com}}

 \end{document}